\newcommand{\op}[1]{\operatorname{#1}}
\newcommand{\N}{\mathbb{N}}
\newcommand{\R}{\mathbb{R}}
\newcommand{\Z}{\mathbb{Z}}
\newcommand{\W}{\mathcal{W}}
\newcommand{\lk}{\op{Lk}}
\newcommand{\lplus}{\op{+\!\!\!\!+}}
\newtheorem{theorem}{Theorem}[section]
\newtheorem{proposition}[theorem]{Proposition}
\newtheorem{corollary}[theorem]{Corollary}
\newtheorem{lemma}[theorem]{Lemma}
\theoremstyle{definition}
\newtheorem{definition}[theorem]{Definition}
\newtheorem{example}[theorem]{Example}
\title{The Parallel Wall Theorem for CAT(0) even 2-complexes}
\author{Carl Kristof-Tessier\footnote{Supported by the FRQNT Doctoral Research Scholarship}}
\date{}
\begin{document}

\maketitle

\begin{abstract}
We prove the Parallel Wall Theorem for $\mathrm{CAT(0)}$ 2-complexes constructed by regular polygons with an even number of sides. This result extends a combination of the works of Janzen and Wise and Hruska and Wise.
\end{abstract}

\section{Introduction}

Coxeter groups are a rich class of non-positively curved groups: these groups act by reflections on an associated piecewise Euclidean complex, called the \textit{Davis complex} \cite{Davis2008GeomCoxeterGps}. 
In \cite{Moussong1988}, Moussong shows that these complexes are CAT(0).
Furthermore, the combinatorial properties of Coxeter groups are governed by \textit{walls}, i.e.\ fixed point sets of reflections in the Davis complex. Walls are closed, convex and separate the Davis complex into two components. It follows that the combinatorial distance between vertices of the Davis complex is equal to the number of walls separating them.

As such, a major theme in understanding the geometry of Coxeter groups comes down to the study of their underlying wallspaces. Many of the results in this direction hinge on the Parallel Wall (PW) Theorem, which  states that walls that are not separated from a vertex by another wall of the Davis complex stay at a uniformly bounded distance from said vertex. 

The PW Theorem was first proven in \cite{BrinkHowlett1993}, in which the result is used to provide an automatic structure on Coxeter groups. Later, Niblo and Reeves use the PW Theorem to cubulate Coxeter groups, with the goal of showing these groups are biautomatic \cite{NibloReeves2003}. However, Coxeter groups fail in general to be cocompactly cubulated. Regardless, the PW Theorem is used to show biautomaticity for 2-dimensional Coxeter groups in \cite{Munro2021} and, finally, biautomaticity is established in full generality in \cite{OsajdaPrzytycki}. 

A Coxeter group is 2-dimensional if its Davis complex is.
The Davis complexes of 2-dimensional Coxeter groups are conceptually simple to describe; these complexes are examples of \textbf{even} 2-complexes, 2-complexes whose cells are regular polygons with an even number of sides.
In general, CAT(0) even 2-complexes have an analogous notion of walls as isometrically embedded trees, allowing us to further understand their geometry. 
Our main result generalizes the cornerstone result in Coxeter groups to this class of CAT(0) 2-complexes:

\begin{theorem}\label{thm:2D_par_wall}
    Let $X$ be a $\mathrm{CAT(0)}$ even 2-complex with $\op{Shapes(X)}$ finite. Then there is a bound $K$, depending only on $\op{Shapes}(X)$, satisfying the following: for any wall $\W$ and vertex $v\in X^{(0)}$ at combinatorial distance $\geq K$ from $\W$, there is a wall $\W'$ separating $v$ from $\W$.
\end{theorem}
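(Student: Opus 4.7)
The plan is to mimic the classical approach used for Coxeter groups and adapted to various 2-complex settings: given a vertex $v$ far from a wall $\W$, take a combinatorial geodesic $\gamma = e_1 e_2 \cdots e_n$ from $v$ to a vertex $w$ on an edge dual to $\W$, and show that the wall $\W'$ dual to the initial edge $e_1$ separates $v$ from $\W$ provided $n$ exceeds a constant $K = K(\op{Shapes}(X))$. Since walls in a CAT(0) even 2-complex are convex trees separating $X$ into two halfspaces, and combinatorial geodesics cross each wall at most once, $\W'$ meets $\gamma$ only in $e_1$ and hence $v$ lies on one side of $\W'$ while $w$ lies on the other. If $\W'$ failed to separate $v$ from $\W$, then some point of $\W$ would lie on the $v$-side of $\W'$; combined with the fact that $\W$ meets a small neighborhood of $w$ (on the $w$-side of $\W'$) and is connected, this forces $\W$ and $\W'$ to cross, i.e.\ to share a 2-cell $C$.

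Assuming such a $C$, I would construct a minimal-area disk diagram $D \to X$ whose boundary reads: the geodesic $\gamma$ from $v$ to $w$; a dual path $\beta$ along $\W$ from $w$ to a corner of $C$; the short arc of $\partial C$ between the two edges through which the walls exit $C$; and a dual path $\alpha$ along $\W'$ from the opposite corner back to $v$. Inside $D$, the walls of $X$ restrict to planar trees with endpoints on $\partial D$; crucially, the geodesicity of $\gamma$ forbids any such tree from having two endpoints on $\gamma$. A combinatorial Gauss--Bonnet computation then applies: the CAT(0) link condition bounds interior vertex curvature by $0$, each 2-cell is Euclidean, and along the boundary the corners of $\gamma$ contribute near-zero curvature, while those along $\alpha$, $\beta$, and the short arc of $\partial C$ contribute a total bounded only in terms of $\op{Shapes}(X)$. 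The identity $2\pi = \sum \kappa$ then caps the length $n$ by an absolute constant $K$.

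The main obstacle is setting up the disk diagram cleanly and doing the curvature bookkeeping. One must choose $C$ to be the 2-cell of intersection between $\W'$ and $\W$ closest to $v$ along a dual path of $\W'$, so that $\alpha$ does not backtrack onto itself; the analogous choice governs $\beta$. Minimality of $D$'s area must be used to ensure that all internal wall-arcs are embedded and form no bigons, and the sharp curvature bound at boundary corners relies on the even structure of each polygon --- opposite edges define parallel midlines, which constrains how wall-corners of $\alpha$ and $\beta$ can meet on $\partial D$. I anticipate that the bulk of the work lies in these combinatorial details: once the diagram and curvature estimates are in place, Gauss--Bonnet yields the bound $K$ in a routine manner.
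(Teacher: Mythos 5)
Your overall strategy --- suppose $\W$ and the candidate wall $\W'$ cross at a 2-cell $C$, build a disk diagram $D$ over $\gamma$, wall-parallel paths $\alpha, \beta$, and an arc of $\partial C$, and derive a contradiction --- is a recognizable one, and the observation that wall-trees in $D$ meet $\gamma$ at most once is correct. But the Gauss--Bonnet bookkeeping as sketched does not close, and the gaps are substantive rather than routine. Most seriously, the claim that corners of $\gamma$ contribute near-zero boundary curvature is unjustified: a combinatorial geodesic in a $\mathrm{CAT(0)}$ even 2-complex can turn through a single corner of a square, so the angle at such a vertex on the $D$-side can be exactly $\pi/2$, giving boundary curvature $+\pi/2$ there; geodesicity (each wall dual to an edge of $\gamma$ is distinct) does not forbid this, and neither does minimality of $\op{Area}(D)$. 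In a locally flat region --- links of type $(2,4,4)$, $(2,3,6)$, $(3,3,3)$, or a square tiling --- interior vertex and face curvatures vanish identically, so Gauss--Bonnet places no upper bound at all on $\op{Area}(D)$: a long thin diagram carries its entire $2\pi$ of curvature at a handful of boundary corners and can be arbitrarily long. Separately, the lengths of $\alpha$ and $\beta$ are not controlled by $\op{Shapes}(X)$: they track the carriers of $\W'$ and $\W$ from $v$ and $w$ out to the crossing cell $C$, and those distances can be comparable to $|\gamma|$, so the total boundary curvature they contribute is not \emph{a priori} bounded by a constant depending only on $\op{Shapes}(X)$. (They also pass through carriers, which the paper notes are not convex --- see \cref{fig:carriers_not_convex} --- so even choosing them cleanly is delicate.) To make the diagram approach work you would need to produce negative curvature in quantity proportional to $|\gamma|$, and it is not clear where it would come from; this is essentially the content that has to be supplied.

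The paper sidesteps disk diagrams entirely and argues in the $\mathrm{CAT(0)}$ metric. After replacing the metric by the truncated piecewise Euclidean metric (\cref{lem:truncation}), so that corner angles of cells take only the values $\frac{q-1}{q}\pi$ with $q\in\{2,3,4,6\}$ while walls remain convex, one projects $v$ onto $\W$ and onto a candidate wall $\W'$ and applies the disjointness criterion of \cref{lem:parallel_criterion}: if $\angle_v(\pi_\W(v),\pi_{\W'}(v))=\pi$ then $\W$ and $\W'$ are disjoint, and $\W'$ separates. This angle is computed via the link formula of \cref{lem:angles_triangle_equality}, and the truncated metric reduces the verification to finitely many local configurations near the first few vertices of $\gamma$, which are dispatched in \cref{lem:red_k=2_bounding_squares}, \cref{lem:second_cell}, \cref{prop:hex_case} and \cref{prop:square_case}. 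This gives an explicit bound (a carrier bound from $\op{Shapes}(X)$ plus $5+\tfrac12$ outside the carrier) with no global area estimate.
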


Note that \cref{thm:2D_par_wall} does not require our spaces to have any symmetry whatsoever. We do not assume the complexes are cocompact, or even locally finite. We only assume that the set of isometry classes of cells, $\op{Shapes}(X)$, is finite so that the piecewise Euclidean metric is complete and geodesic. One may think of this assumption as the analogue of finite-dimensionality in cube complexes.

This extends the work of \cite{JanzenWise13} and \cite{HruskaWise14}, who indirectly show \cref{thm:2D_par_wall} under the assumption that $X$ admits a geometric group action.
Janzen and Wise show that the universal cover of a \textit{compact} non-positively curved even 2-complex $Y$ is relatively cocompactly cubulated with locally finite, finite dimensional peripheries. The work of Hruska and Wise implies that the cubulation of $\widetilde{Y}$ is locally finite which they show implies \cref{thm:2D_par_wall} for $\widetilde{Y}$.

In contrast, our proof is purely geometric, relying on metric properties of CAT(0) spaces. We use an elementary criterion for two convex subsets of a CAT(0) space to be disjoint in terms of Alexandrov angles (\cref{sec:angles}). In \cref{sec:even_2complexes}, we define the walls of CAT(0) even 2-complexes and show that they are convex. 
We also consider \textit{truncated piecewise Euclidean structures} (\cref{sec:pw_euc_structs}) in order to restrict the configurations of links in our complexes. This allows us to reduce our proof to finitely many cases, which we deal with in \cref{sec:proof}.

\vspace{1em}
\textbf{Acknowledgements} I would like to thank Piotr Przytycki for supervising me throughout and beyond this project. I would also like to thank Dani Wise for reading an initial version of this paper and offering insightful comments.

\section{Angles and convex subspaces of CAT(0) spaces}
\label{sec:angles}

We briefly review some basic facts in $\mathrm{CAT(0)}$ geometry used in the proof of \cref{thm:2D_par_wall}. We refer to \cite{BridsonHaefliger1999} for an in depth treatment of $\mathrm{CAT(0)}$ spaces.

Let $X$ be a (uniquely) geodesic space. Following \cite{BridsonHaefliger1999}, we denote by $[x, y]$ a (the) geodesic connecting $x$ to $y$. 
Given $x,y,z\in X$, the \textbf{comparison angle} $\overline{\angle_z}(x,y)$ is the Euclidean angle in the comparison triangle $\overline{\Delta}(x,y,z)$ at the vertex $\overline{z}$.
Given geodesics $\gamma_1, \gamma_2$ with $\gamma_1(0)=\gamma_2(0)$, we may then define the \textbf{Alexandrov angle} between $\gamma_1$ and $\gamma_2$ as
\[
    \angle(\gamma_1,\gamma_2)=\lim_{\epsilon\to 0} \inf_{t,t'\leq \epsilon} \overline{\angle_p}(\gamma_1(t),\gamma_2(t'))
\]
We denote by $\angle_z(x,y)$ the Alexandrov angle between segments $[z,x]$ and $[z,y]$.

\begin{proposition}[{\cite[Prop. II.1.7]{BridsonHaefliger1999}}]
    \label[proposition]{prop:cat0_angle_ineq}
    $X$ is $\mathrm{CAT(0)}$ if and only if $\angle_z(x,y)\leq \overline{\angle_z}(x,y)$ for all $x,y,z\in X$. In particular, the sum of Alexandrov angles in a geodesic triangle of a $\mathrm{CAT(0)}$ space is bounded by $\pi$.
\end{proposition}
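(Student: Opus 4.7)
The plan is to prove the two implications separately and then deduce the ``in particular'' clause.

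For the forward direction ($\Rightarrow$), assume $X$ is $\mathrm{CAT(0)}$. Fix $x, y, z \in X$, parametrize the geodesics $[z, x], [z, y]$ by arc length as $\gamma_1, \gamma_2$, and pick small $t, t' > 0$. The points $p = \gamma_1(t)$ and $q = \gamma_2(t')$ lie on the sides $[z, x]$ and $[z, y]$ of the geodesic triangle $\Delta(x, y, z)$, so the $\mathrm{CAT(0)}$ condition gives $d(p, q) \leq d(\overline{p}, \overline{q})$, where $\overline{p}, \overline{q}$ are the comparison points in $\overline{\Delta}(x, y, z)$. Because $d(z, p) = d(\overline{z}, \overline{p})$ and $d(z, q) = d(\overline{z}, \overline{q})$, the Euclidean law of cosines applied to the comparison triangle on $\overline{z}, \overline{p}, \overline{q}$ forces $\overline{\angle_z}(p, q) \leq \overline{\angle_z}(x, y)$, the right-hand side being the angle at $\overline{z}$ in $\overline{\Delta}(x, y, z)$. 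Taking $t, t' \to 0$ in the definition of $\angle_z(x, y)$ delivers the inequality.

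For the backward direction ($\Leftarrow$), assume the Alexandrov angle inequality. It suffices to verify the $\mathrm{CAT(0)}$ condition in the ``vertex versus opposite side'' form: given a geodesic triangle $\Delta(p, q, r)$ and $x \in [q, r]$, show $d(p, x) \leq d(\overline{p}, \overline{x})$ where $\overline{x}$ is the comparison point on $[\overline{q}, \overline{r}]$; the general case then follows by a standard iterative argument. Since $[x, q] \cup [x, r]$ is a geodesic from $q$ to $r$, we have $\angle_x(q, r) = \pi$, so the triangle inequality for Alexandrov angles yields $\angle_x(p, q) + \angle_x(p, r) \geq \pi$. The hypothesis then promotes this to $\overline{\angle_x}(p, q) + \overline{\angle_x}(p, r) \geq \pi$. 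Gluing the comparison triangles $\overline{\Delta}(p, q, x)$ and $\overline{\Delta}(p, x, r)$ along their shared edge $[\overline{p}, \overline{x}]$ and invoking Alexandrov's planar lemma, the angle-sum condition at $\overline{x}$ forces the third side of the combined configuration $\overline{\Delta}(p, q, r)$ to satisfy $d(\overline{p}, \overline{x}) \geq d(p, x)$.

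The ``in particular'' clause is then immediate: summing $\angle \leq \overline{\angle}$ at each of the three vertices of a geodesic triangle bounds the total Alexandrov angle sum by the sum of comparison angles, which equals $\pi$.

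The main obstacle I anticipate is the invocation of Alexandrov's lemma in the backward direction. The intuition is clean -- folding two Euclidean triangles together whose shared-vertex angles sum to at least $\pi$ cannot shrink the relevant ``opposite'' distance -- but the bookkeeping involves three distinct Euclidean triangles sharing common edges, and one must compare them carefully to extract the required inequality. Everything else in the argument is a direct translation between distances and angles via the Euclidean law of cosines.
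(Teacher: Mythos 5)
The paper does not prove this proposition; it is cited directly from Bridson and Haefliger, so there is no in-text argument to compare against. Your outline is the standard proof from that reference and is essentially correct. The forward direction is exactly the law-of-cosines monotonicity argument you describe. For the converse, you correctly identify the three ingredients: reduction to the ``vertex versus opposite side'' form of the CAT(0) inequality, the triangle inequality for Alexandrov angles (to pass from $\angle_x(q,r)=\pi$ to $\angle_x(p,q)+\angle_x(p,r)\geq\pi$, then to the comparison angles via the hypothesis), and Alexandrov's lemma. Your last sentence conflates the glued configuration with the comparison triangle $\overline{\Delta}(p,q,r)$: what Alexandrov's lemma gives is that the distance from the glued image of $x$ to the opposite vertex in the two-triangle configuration (which equals $d(p,x)$ by construction) is at most the corresponding distance $d(\bar p,\bar x)$ in $\overline{\Delta}(p,q,r)$, which is precisely the CAT(0) inequality being sought. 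The ``in particular'' clause is handled correctly. One remark for completeness rather than as a criticism of your proposal: the displayed formula for the Alexandrov angle in \cref{sec:angles} is written as a $\liminf$, whereas Bridson--Haefliger (and the triangle inequality for angles your converse relies on) use the $\limsup$; in a CAT(0) space the comparison angle is monotone in the parameters, so the two coincide and nothing breaks, but your argument tacitly uses the $\limsup$ version.
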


CAT(0) spaces are always uniquely geodesic, and behave nicely with respect to their convex subsets:

\begin{proposition}[{\cite[Prop. II.2.4]{BridsonHaefliger1999}}]
    \label[proposition]{prop:projection_existence_uniqueness}
    Let $X$ be a complete $\mathrm{CAT(0)}$ space, and $C\subset X$ a closed convex subset of $X$. Then for $x\in X$, there is a unique point $\pi_C(x)\in C$ such that $d(x,\pi_C(x))=d(x,C)$. Furthermore, if $x\notin C$ then $\pi_C(x)$ is the unique point in $X$ satisfying $\angle_{\pi_C(x)}(x,y)\geq \pi/2$ for~all~$y\in~ C\setminus \{\pi_C(x)\}$.
\end{proposition}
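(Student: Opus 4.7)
The plan has three pieces: existence of the projection via a Cauchy argument, the necessity of the angle condition via Euclidean comparison, and uniqueness of the point satisfying the angle condition.

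For existence and uniqueness of $\pi_C(x)$, pick $y_n \in C$ with $d(x, y_n) \to d := d(x, C)$. For any $n, m$ the midpoint $m_{nm}$ of $[y_n, y_m]$ lies in $C$ by convexity, so $d(x, m_{nm}) \geq d$. The CN inequality of Bruhat--Tits, which is a reformulation of the CAT(0) condition (cf.\ \cite{BridsonHaefliger1999}), gives
\[
    d(x, m_{nm})^2 \leq \tfrac{1}{2} d(x, y_n)^2 + \tfrac{1}{2} d(x, y_m)^2 - \tfrac{1}{4} d(y_n, y_m)^2.
\]
Combining yields $d(y_n, y_m)^2 \leq 2(d(x, y_n)^2 + d(x, y_m)^2) - 4 d^2 \to 0$, so $(y_n)$ is Cauchy; completeness of $X$ and closedness of $C$ produce a limit $\pi_C(x) \in C$ realizing the infimum, and applying the same inequality to any two minimizers yields uniqueness.

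For the forward direction of the angle characterization, assume $x \notin C$, set $p := \pi_C(x)$, fix $y \in C \setminus \{p\}$, and for $t \in (0, 1]$ let $y_t$ be the point on $[p, y]$ with $d(p, y_t) = t \cdot d(p, y)$. Convexity gives $y_t \in C$, hence $d(x, y_t) \geq d(x, p)$; combining this with the Euclidean law of cosines in the comparison triangle $\overline{\Delta}(x, p, y_t)$ yields
\[
    \cos \overline{\angle_p}(x, y_t) \leq \frac{d(p, y_t)}{2\, d(x, p)} = \frac{t \cdot d(p, y)}{2\, d(x, p)}.
\]
The Alexandrov angle $\angle_p(x, y)$ is defined as a limit of comparison angles at arguments tending to $p$ in \emph{both} variables, but the standard CAT(0) monotonicity of comparison angles (a consequence of \cref{prop:cat0_angle_ineq}) gives $\angle_p(x, y) \geq \overline{\angle_p}(x, y_t)$ for each fixed $t$. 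Letting $t \to 0$ forces $\cos \angle_p(x, y) \leq 0$, i.e.\ $\angle_p(x, y) \geq \pi/2$.

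For uniqueness, suppose $p' \in C$ also satisfies $\angle_{p'}(x, z) \geq \pi/2$ for every $z \in C \setminus \{p'\}$. Applying this with $z = p$ (if $p' \neq p$) and invoking \cref{prop:cat0_angle_ineq} gives $\overline{\angle_{p'}}(x, p) \geq \pi/2$, so the Euclidean law of cosines yields $d(x, p)^2 \geq d(x, p')^2 + d(p, p')^2$. Since $p$ minimizes distance to $C$, $d(x, p) \leq d(x, p')$, forcing $d(p, p') = 0$, a contradiction. The main subtlety lies in the forward direction: the minimality inequality $d(x, y_t) \geq d(x, p)$ only controls the comparison angle at the endpoint $x$ of the segment $[p, x]$, while the Alexandrov angle is a limit in both endpoint parameters, so the sole nontrivial input beyond \cref{prop:cat0_angle_ineq} is the CAT(0) monotonicity of comparison angles in both variables.
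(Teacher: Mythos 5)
The paper cites this proposition from \cite{BridsonHaefliger1999} without proof, so there is no in-paper argument to compare against. Your existence and uniqueness of $\pi_C(x)$ via the Bruhat--Tits CN inequality, and your argument for the uniqueness of the angle-satisfying point, are both correct and standard.

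The forward direction of the angle characterization, however, contains a genuine error: you assert that monotonicity of comparison angles gives $\angle_p(x,y) \geq \overline{\angle_p}(x,y_t)$ for fixed $t$. Monotonicity gives exactly the \emph{opposite}: the comparison angle $\overline{\angle_p}(\gamma_1(s),\gamma_2(t))$ is non-decreasing in $s$ and in $t$, so the Alexandrov angle, being the limit as both parameters tend to $0$, is the \emph{infimum} over all $s,t>0$; hence $\angle_p(x,y) \leq \overline{\angle_p}(x, y_t)$. Your bound $\cos\overline{\angle_p}(x,y_t) \leq t\,d(p,y)/(2d(x,p)) \to 0$ bounds the comparison angle $\overline{\angle_p}(x,y_t)$ from below, which combined with the correct direction of monotonicity tells you nothing about the Alexandrov angle.

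The standard way to close this gap does not pass through a one-sided limit at all: suppose toward a contradiction that $\overline{\angle_p}(\gamma_1(s),\gamma_2(t)) < \pi/2$ for some small $s,t>0$, where $\gamma_1=[p,x]$ and $\gamma_2=[p,y]$. In the Euclidean comparison triangle $\overline{\Delta}(p,\gamma_1(s),\gamma_2(t))$, the foot $\overline{f}$ of the perpendicular from $\overline{\gamma_1(s)}$ to the ray from $\overline{p}$ through $\overline{\gamma_2(t)}$ satisfies $d(\overline{\gamma_1(s)},\overline{f}) < s$. Taking $f$ to be the point of $[p,\gamma_2(t)]\subset C$ corresponding to $\overline{f}$ (or $f=\gamma_2(t)$ if $\overline{f}$ falls beyond $\overline{\gamma_2(t)}$), the $\mathrm{CAT(0)}$ inequality gives $d(\gamma_1(s),f) \leq d(\overline{\gamma_1(s)},\overline{f}) < s$. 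But $\gamma_1(s)$ lies on $[p,x]$, so $d(x,\gamma_1(s)) = d(x,p)-s$, whence $d(x,f) \leq d(x,\gamma_1(s)) + d(\gamma_1(s),f) < d(x,p)$, contradicting that $p$ minimizes distance to $C$. Thus $\overline{\angle_p}(\gamma_1(s),\gamma_2(t)) \geq \pi/2$ for all $s,t$, and now taking the limit in \emph{both} variables does yield $\angle_p(x,y) \geq \pi/2$. (Alternatively, one could invoke the first-variation formula, which does give $\angle_p(x,y) = \lim_{t\to 0}\overline{\angle_p}(x,y_t)$ in $\mathrm{CAT(0)}$ spaces, but that is a considerably deeper input than monotonicity and is not what you cited.)
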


\begin{lemma}[The disjointness criterion]
    \label[lemma]{lem:parallel_criterion}
    Let $A, B$ be closed convex subsets of a complete $\mathrm{CAT(0)}$ space $X$, and let $x\in X$ be a point such that $\pi_A(x), \pi_B(x)$ and $x$ are distinct. Suppose further that $\angle_x(\pi_A(x),\pi_B(x))=\pi$. Then $A$ and $B$ are disjoint.
\end{lemma}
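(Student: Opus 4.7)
The plan is to assume, for contradiction, that there exists $y \in A \cap B$, and set $a = \pi_A(x)$, $b = \pi_B(x)$. Two preliminary observations: first, $y \neq x$, since $x = y$ would give $x \in A$ and hence $\pi_A(x) = x = a$, contradicting $a \neq x$; second, the hypothesis $\angle_x(a,b) = \pi$ together with the fact that local geodesics are global geodesics in a CAT(0) space shows that $[a,x] \cup [x,b]$ is a geodesic, so $x \in [a,b]$. In particular $[a,x]$ is an initial subsegment of $[a,b]$, so $\angle_a(b,x) = 0$, and symmetrically $\angle_b(a,x) = 0$.

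The degenerate cases $y \in \{a,b\}$ can be settled directly. If $y = a$, then $a \in B$ with $a \neq b$, so \cref{prop:projection_existence_uniqueness} yields $\angle_b(x,a) \geq \pi/2$; but the sum of Alexandrov angles in $\Delta(x,a,b)$ is then at least $\pi + 0 + \pi/2 > \pi$, violating \cref{prop:cat0_angle_ineq}. The case $y = b$ is symmetric.

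In the main case $y \notin \{a,b\}$, \cref{prop:projection_existence_uniqueness} gives $\angle_a(x,y) \geq \pi/2$, which the triangle inequality for Alexandrov angles at $a$ (combined with $\angle_a(b,x) = 0$) upgrades to $\angle_a(b,y) \geq \pi/2$; symmetrically $\angle_b(a,y) \geq \pi/2$. I would then pass to the Euclidean comparison triangle $\overline{\Delta}(a,b,y)$, whose angles sum to $\pi$: by \cref{prop:cat0_angle_ineq}, $\overline{\angle_a}(b,y), \overline{\angle_b}(a,y) \geq \pi/2$, so $\overline{\angle_y}(a,b) = 0$. By the Euclidean law of cosines this forces $d(a,b) = |d(a,y) - d(b,y)|$, so WLOG $d(a,y) = d(a,b) + d(b,y)$; the concatenation $[a,b] \cup [b,y]$ is then a minimizing path from $a$ to $y$, hence equals $[a,y]$ by uniqueness of geodesics in CAT(0), giving $b \in [a,y]$. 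Convexity of $A$ yields $[a,y] \subset A$, so $b \in A$, hence $[a,b] \subset A$, forcing $x \in A$ and contradicting $a = \pi_A(x) \neq x$.

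The main obstacle I anticipate is recognizing that the Alexandrov angle sum bound alone only yields $\angle_y(a,b) \leq 0$, which in general is compatible with tree-like branching of $[y,a]$ and $[y,b]$ after a common initial segment; what one really needs is the stronger comparison-angle bound $\overline{\angle_y}(a,b) = 0$, whose interpretation via the Euclidean law of cosines produces a genuine triangle-inequality equality that uniqueness of CAT(0) geodesics then converts into the collinearity $b \in [a,y]$.
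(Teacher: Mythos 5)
Your proof follows the same strategy as the paper's: assume $y \in A \cap B$, note that the projection condition forces the angles of the triangle $\Delta(y, a, b)$ at $a = \pi_A(x)$ and $b = \pi_B(x)$ to be $\geq \pi/2$ (using that $x \in [a,b]$), and derive a contradiction from the CAT(0) angle bound. The paper stops at ``two right angles contradicts the angle sum $\leq \pi$''; as you correctly anticipate, that Alexandrov angle sum bound alone is not quite decisive, since $\pi/2 + \pi/2 + 0 = \pi$ is permissible, and your pass to the comparison triangle is the right way to close the argument. Two small remarks: first, your handling of $y \in \{a,b\}$ patches a genuine (if minor) omission in the paper's proof, which applies \cref{prop:projection_existence_uniqueness} without verifying $y \neq \pi_A(x)$ and $y \neq \pi_B(x)$; second, once you have $\overline{\angle_a}(b,y) = \overline{\angle_b}(a,y) = \pi/2$ in the Euclidean comparison triangle, you can finish immediately, since a Euclidean triangle with two right angles on a base of positive length $d(a,b) > 0$ does not exist, so the detour through the law of cosines, collinearity, and convexity of $A$ is correct but unnecessary.
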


\begin{proof}
    Suppose that $A$ and $B$ intersect at some point $y$. Then by \cref{prop:projection_existence_uniqueness}, we have $\angle_{\pi_A(x)}(y,x)\geq \pi/2$ and $\angle_{\pi_B(x)}(y,x)\geq \pi/2$. But our assumption $\angle_x(\pi_A(x),\pi_B(x))=\pi$ implies that the segment $[\pi_A(x),x)]\cup[x, \pi_B(x)]$ is geodesic. Thus the geodesic triangle $\Delta(y, \pi_A(x), \pi_B(x))$ has two right angles, which contradicts \cref{prop:cat0_angle_ineq}.
\end{proof}

\section{Even 2-complexes}
\label{sec:even_2complexes}

\begin{definition}
    An \textbf{even 2-complex} is a 2-complex in which every 2-cell is isometric to a regular polygon with an even number of sides.
\end{definition}

In what follows, we will always assume that $\op{Shapes}(X)$, the set of isometry classes of cells in~$X$, is finite, so that the piecewise euclidean metric on $X$ is well defined \cite{BridsonHaefliger1999}. In particular, we assume that $X$ is complete and geodesic.

\begin{definition}
    Let $X$ be a 2-complex. The \textbf{link of a vertex} $v\in X^{(0)}$, denoted by $\lk(v, X)$, is a metric graph with 
    \begin{itemize}
        \item vertices correspond to edges of $X$ containing $v$,
        \item and edges of length $\angle(e, e')$ between the vertices of $\lk(v, X)$ corresponding to edges $e, e'$ whenever $e$ and $e'$ bound a 2-cell in $X$.
    \end{itemize}
\end{definition}

In even 2-complexes, the link of a vertex $v\in X$ can be thought of as a unit sphere around $v$ endowed with the intrinsic path metric. Every geodesic $[v, x]$ starting at $v$ has a corresponding point $\vec{x}\in\lk(v, X)$, which in our case can be obtained by intersecting $[v, x]$ with the unit sphere around $v$ (after possibly extending the geodesic $[v, x]$).

\begin{lemma}
\label[lemma]{lem:angle_linkpath}
    Let $X$ be a 2-dimensional piecewise Euclidean complex (with $\op{Shapes}(X)$ finite). Then the Alexandrov angle between geodesics $[v, x]$ and $[v, y]$ starting at a vertex $v$ is the minimum between $\pi$ and the shortest path in $\op{Lk}(v,X)$ between the points $\vec{x}, \vec{y}\in\op{Lk}(v,X)$.
\end{lemma}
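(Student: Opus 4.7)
The plan is to set $L = d_L(\vec{x}, \vec{y})$ for the length of a shortest path in $\lk(v,X)$ and prove $\angle_v(x,y) = \min(\pi, L)$ by establishing the two inequalities separately. Both directions rely on the same geometric tool: a sequence of 2-cells $C_1, \ldots, C_n$ at $v$ with consecutive cells sharing an edge through $v$ can be unfolded isometrically into the Euclidean plane around a common image of $v$, since each cell is already a flat polygon and unfolding amounts to laying the sequence out flat along their shared edges.

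For the upper bound, I would fix a geodesic in $\lk(v,X)$ of length $L$ from $\vec{x}$ to $\vec{y}$; it traverses link edges corresponding to a sequence of 2-cells $C_1, \ldots, C_n$ meeting at $v$. Unfolding these cells yields a sector in the plane of total angle $L$ at $v$. The case $L \geq \pi$ is immediate since $\angle_v \leq \pi$ by definition, so I would reduce to $L < \pi$, in which case the sector is convex. For small enough $\epsilon$, the straight chord in the plane from the image of $\gamma_1(\epsilon)$ to the image of $\gamma_2(\epsilon)$ has length $2\epsilon \sin(L/2)$, stays inside the sector (and hence inside the unfolded cells), and folds back to a path in $X$ of the same length. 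This gives $d(\gamma_1(\epsilon), \gamma_2(\epsilon)) \leq 2\epsilon \sin(L/2)$, hence $\overline{\angle_v}(\gamma_1(\epsilon), \gamma_2(\epsilon)) \leq L$, so $\angle_v(x,y) \leq L$.

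For the lower bound, I would analyze the geodesic $\sigma_\epsilon$ from $\gamma_1(\epsilon)$ to $\gamma_2(\epsilon)$ in $X$. Its length is at most $2\epsilon$, so for small $\epsilon$ it is confined to the star of $v$ and traverses some sequence of cells $C_{j_1}, \ldots, C_{j_k}$. The directions at which $\sigma_\epsilon$ enters and exits each cell at $v$ determine a path in $\lk(v,X)$ from $\vec{x}$ to $\vec{y}$ of some length $\theta \geq L$. Unfolding these cells, the image of $\sigma_\epsilon$ in the plane is a single straight segment: it is straight within each flat cell (being a local geodesic) and straight across each interior edge crossing, because near such a crossing point the metric induced on just the two adjacent cells is flat, forcing a local geodesic to be straight in their unfolding. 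The segment's endpoints sit at distance $\epsilon$ from the unfolded $v$, subtending an angle of $\theta$, so its length is at least $2\epsilon \sin(\min(\theta, \pi)/2) \geq 2\epsilon \sin(\min(L, \pi)/2)$. Therefore $\overline{\angle_v}(\gamma_1(\epsilon), \gamma_2(\epsilon)) \geq \min(L, \pi)$, giving $\angle_v(x,y) \geq \min(\pi, L)$.

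The main obstacle I anticipate is justifying rigorously that $\sigma_\epsilon$ unfolds to a straight segment when it crosses a singular edge (one shared by three or more 2-cells). The key observation is that, at an interior point of such an edge, the intrinsic flat metric on the union of the two specific cells that $\sigma_\epsilon$ passes through agrees locally with the ambient $X$-metric on that union, since any competing path through other pages must also cross the edge locally with the same length; as a local $X$-geodesic, $\sigma_\epsilon$ is therefore forced to be straight in the two-cell unfolding. A minor additional subtlety is that $\vec{x}$ or $\vec{y}$ may lie in the interior of a link edge rather than at a link vertex, which is handled by including the appropriate partial initial or terminal cell in the unfolding.
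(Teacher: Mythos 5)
The paper states this lemma without proof; it is a standard consequence of the local cone structure at a vertex of a piecewise Euclidean complex: a small ball about $v$ is isometric to a ball about the apex of the Euclidean cone over $\lk(v,X)$ (Bridson--Haefliger, Theorem I.7.16), and in a Euclidean cone the apex angle between two rays equals $\min(\pi, d)$, where $d$ is the distance between their directions in the base. Your unfolding argument is a correct, self-contained derivation of the same fact: in effect you re-prove the cone isometry locally along the two relevant directions rather than invoking it wholesale, and you have correctly isolated the essential subtlety, namely that the $X$-geodesic $\sigma_\epsilon$ develops to a straight segment across a singular edge.

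Two points deserve tightening in the lower bound. First, treat separately the case where $\sigma_\epsilon$ passes through $v$: then $|\sigma_\epsilon|=2\epsilon$, the comparison angle is $\pi$, and the bound $\geq\min(L,\pi)$ is trivial. When $\sigma_\epsilon$ avoids $v$, its development is a straight segment, so the Euclidean angle $\theta'$ it subtends at the apex is automatically strictly less than $\pi$; the reason $\theta'\geq \min(L,\pi)$ is that the radial projection of $\sigma_\epsilon$ to $\lk(v,X)$ is a path of length exactly $\theta'$ from $\vec{x}$ to $\vec{y}$, hence $\theta'\geq L$ (in particular $\sigma_\epsilon$ is forced through $v$ whenever $L\geq\pi$). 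Writing $2\epsilon\sin(\min(\theta,\pi)/2)$ somewhat obscures this logic: the chord formula only applies once you know the subtended angle is below $\pi$, which is a consequence of avoiding $v$, not a $\min$ one may freely take. Second, one should observe that, since $\op{Shapes}(X)$ is finite, there is a positive lower bound on the distance from $v$ to any other vertex, so for small $\epsilon$ the geodesic $\sigma_\epsilon$ avoids all vertices other than $v$ and meets only finitely many cells -- which is what permits developing it along a finite cell sequence in the first place.
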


\begin{proposition}[\cite{BridsonHaefliger1999}]
    $X$ is $\mathrm{CAT(0)}$ if and only if $X$ is simply connected and satisfies the \textbf{link condition}: for every vertex $v\in X^{(0)}$, $\lk(v, X)$ is a simplicial graph with no embedded cycles of length~$<2\pi$.
\end{proposition}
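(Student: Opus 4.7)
The strategy is to reduce the global statement to a local one via the Cartan-Hadamard theorem and then to identify the local CAT(0) condition at a vertex with the CAT(1) condition on its link.

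First I would invoke the Cartan-Hadamard theorem (Theorem II.4.1 of Bridson-Haefliger): a complete length space is CAT(0) if and only if it is simply connected and locally CAT(0). Since the finiteness of $\op{Shapes}(X)$ guarantees that $X$ is a complete geodesic space, it suffices to show that $X$ is locally CAT(0) if and only if the link condition holds at every vertex. For points in the interior of a $2$-cell, a small neighborhood is an open Euclidean disk, hence CAT(0) for free. For a point in the interior of an edge $e$, a small neighborhood is isometric to the product of an open interval with the Euclidean cone on a finite discrete set (one point per $2$-cell incident to $e$), which is CAT(0) automatically. So the only place where local CAT(0) can fail is at a vertex $v \in X^{(0)}$.

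Next I would use the fact that a small metric ball around $v$ is isometric to an open neighborhood of the cone point in the Euclidean cone $C\lk(v,X)$ (Theorem I.7.16 in Bridson-Haefliger). By Berestovskii's theorem (Theorem II.3.14 in Bridson-Haefliger), the cone $C L$ on a metric space $L$ of diameter $\leq \pi$ is CAT(0) if and only if $L$ is CAT(1). So the task reduces to proving that the metric graph $\lk(v,X)$ is CAT(1) if and only if it is simplicial and has no embedded cycle of length $<2\pi$.

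For this final step, I would exploit the one-dimensionality of $\lk(v,X)$. A metric graph is CAT(1) precisely when any two points at distance $<\pi$ are joined by a unique local geodesic of length $<\pi$. On a graph, two distinct local geodesics with the same endpoints must together trace an embedded cycle; thus the presence of two points of distance $<\pi$ joined by distinct geodesics of length $<\pi$ is equivalent to the existence of an embedded cycle of length $<2\pi$. A non-simplicial graph contains either a loop (embedded cycle of length equal to one edge, hence $<\pi<2\pi$, since every edge of the link has length $<\pi$ — each interior angle of a regular polygon with $\geq 4$ sides is $<\pi$) or a bigon formed by multiple edges between two vertices (embedded cycle of length $<2\pi$), so the simplicial condition is subsumed by the cycle condition together with this edge-length bound. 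Conversely, if all embedded cycles have length $\geq 2\pi$, uniqueness of short geodesics follows and Reshetnyak's criterion for CAT(1) on graphs applies.

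The main obstacle is the cone/link identification: verifying carefully that local CAT(0) at $v$ is equivalent to CAT(1) of $\lk(v, X)$ requires the combination of Berestovskii's cone theorem and the local-isometry identification of a neighborhood of $v$ with a truncated Euclidean cone, both of which are standard but technically delicate. Once this identification is in place, the one-dimensionality of links makes the rest essentially a combinatorial statement about cycle lengths in metric graphs.
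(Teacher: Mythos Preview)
The paper does not prove this proposition; it is stated with a bare citation to Bridson--Haefliger and no argument is given. Your sketch is the standard proof from that reference (Cartan--Hadamard to reduce to a local question, cone-over-link identification, Berestovskii's theorem, and the characterisation of CAT(1) for metric graphs via systole), so there is nothing in the paper to compare against and your outline is correct in substance.

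Two small remarks. First, Berestovskii's theorem does not require the link to have diameter $\leq\pi$; the equivalence $C(L)$ is CAT(0) $\iff$ $L$ is CAT(1) holds unconditionally, so you can drop that hypothesis. Second, your claim that the simplicial requirement is subsumed by the cycle-length bound relies on the fact that every edge of $\lk(v,X)$ has length $<\pi$; this is true here because the interior angle of a regular $2n$-gon is $\frac{n-1}{n}\pi<\pi$, but it is worth making that dependence on the ambient hypothesis explicit, since for general piecewise Euclidean $2$-complexes one can have link edges of length $\geq\pi$ and the simplicial condition is then genuinely extra.
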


\subsection{Walls in even 2-complexes}

Walls of an even 2-complex have been discussed in \cite{JanzenWise13} as walls in its rhombic subdivision. Here, we consider a slightly different construction that agrees with Janzen and Wise's walls on the 1-skeleton. However, our walls are also convex in the piecewise Euclidean metric when the complex is CAT(0).

Let $X$ be an even 2-complex, and let $P$ be an even polygon in $X$. Then every edge of $P$ has an opposite edge. The \textbf{mirror} dual to a pair of opposite edges $e, e'\subset P$, denoted by $M(e, e')$, is the segment in $P$ connecting the midpoint of $e$ with the midpoint of $e'$.  
We say that two edges of an even 2-complex $X$ are equivalent if they are joint by a sequence of opposite edges in $X$. The equivalence class of an edge $e\subset X$ is denoted by $[e]$.

\begin{definition}
    The \textbf{wall dual to the edge} $e\subset X$ is the abstract complex obtained by taking mirrors dual to opposite edges in $[e]$ and identifying endpoints of two mirrors when the endpoints are the midpoint of the same edge in $X$.
\end{definition}

\begin{proposition}
\label[proposition]{prop:walls_separate} 
    Let $X$ be a $\mathrm{CAT(0)}$ even 2-complex, and $\W$ a wall in $X$. Then 
    \begin{enumerate}
        \item The map $f:\W\to X$ that sends cells in $\W$ to their appropriate mirrors in $X$ is an isometric embedding. In particular, $\W$ can be identified with a closed and convex subset of $X$.
        \item Under this identification, $N_{\frac{1}{2}}(\W)$ convex, and isometric to $\W \times \left(-\frac{1}{2},\frac{1}{2}\right)$,
        \item $\W\subset X$ separates $X$ into two convex components, called (open) \textbf{halfspaces}.
    \end{enumerate}
\end{proposition}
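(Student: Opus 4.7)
The proof plan rests on a single link computation at edge midpoints. Let $m$ be the midpoint of an edge $e \subset X$ shared by polygons $P_1, \dots, P_k$. Then $\lk(m, X)$ is a ``theta'' graph: two vertices $v_\pm$ (the $\pm$ directions along $e$) joined by $k$ arcs of length $\pi$, one per adjacent polygon. Each mirror dual to $e$ inside some $P_i$ enters $m$ perpendicular to $e$, and therefore corresponds to the midpoint of the $i$-th arc. Any two such mirror-midpoints lie at distance exactly $\pi$ in this link (travel $\pi/2$ to a common vertex $v_+$ or $v_-$ and $\pi/2$ back), so by \cref{lem:angle_linkpath} any two mirrors meeting at $m$ make Alexandrov angle $\pi$ in $X$.

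For part (1), this angle-$\pi$ property gives that $f: \W \to X$ is a local isometry: at each abstract vertex of $\W$, the incident mirrors pairwise extend into each other as $X$-geodesic segments, so small stars in $\W$ map isometrically. Since $\W$ is connected by construction, if $f(x) = f(y)$ for $x \neq y$ then a path in $\W$ from $x$ to $y$ would map to a local --- hence global --- $X$-geodesic of positive length from $f(x)$ to itself, impossible. Similarly, an embedded loop in $\W$ would yield a local-geodesic loop of positive length in $X$, forbidden in CAT(0), so $\W$ is a tree. Then along the unique $\W$-path between any two points, the image is a local and hence global $X$-geodesic; this delivers $d_\W(x, y) = d_X(f(x), f(y))$ and $[f(x), f(y)] \subset f(\W)$, so $\W$ is isometrically embedded, closed, and convex in $X$.

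For part (2), inside each polygon $P$ containing a mirror $M$ of $\W$, the region $N_{1/2}(M) \cap P$ is a rectangular strip isometric to $M \times (-1/2, 1/2)$, using that a regular $2k$-gon with unit edges has width $\geq 1$ perpendicular to $M$ everywhere (with equality on the two edges bounding $M$). These strips glue along half-edges of $X$: at a branch point $m$ of $\W$, the transverse $(-1/2, 1/2)$-fiber is exactly the edge $e$ of $X$, which sits identically in every adjacent $P_i$, so strips from different polygons glue compatibly along $e$. This yields a global isometry $N_{1/2}(\W) \cong \W \times (-1/2, 1/2)$, and convexity follows since the CAT(0) product of the tree $\W$ with an interval embeds isometrically into $X$.

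For part (3), the product structure already gives two components of $N_{1/2}(\W) \setminus \W$. Globally I assign each $x \in X \setminus \W$ a side via the component of $\lk(\pi_\W(x), X) \setminus \{\text{mirror directions}\}$ containing the initial direction of $[x, \pi_\W(x)]$. Convexity of each side follows from the same link analysis: were a geodesic $[x, y]$ between same-side points to cross $\W$ at some $p$, the directions of $[p, x]$ and $[p, y]$ in $\lk(p, X)$ would both lie in the open star of a single $v_\pm$, which has diameter strictly less than $\pi$ in the theta graph --- incompatible with the angle $\pi$ forced by $[x, p] \cup [p, y]$ being geodesic. The main obstacle is the link analysis at arbitrarily high-valence branch points; once the pairwise-$\pi$ distance and the two-component splitting are established, the rest is standard CAT(0) bookkeeping.
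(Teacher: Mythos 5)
Your proof is essentially correct and arrives at the same conclusions, but it takes a genuinely different and more hands-on route, particularly in parts (1) and (3).

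For (1), you establish injectivity and convexity directly: you prove the pairwise angle-$\pi$ property at edge midpoints via the theta-graph link, deduce that $\W$ is a tree (no local-geodesic loops in CAT(0)), and then argue that the unique $\W$-path maps to a global $X$-geodesic. The paper instead observes the same local product structure and then cites \cite[Prop.\ II.4.14]{BridsonHaefliger1999} (a local isometry from a complete geodesic space into a CAT(0) space is an isometric embedding with convex image), which packages the injectivity and convexity in one step. Your argument is more elementary; the paper's is shorter. Both are fine. One caveat: you should say the word ``geodesic'' rather than ``a path'' when arguing that $f(x)=f(y)$ is impossible --- an arbitrary path in $\W$ need not map to a local geodesic --- and you should note that $\W$ is complete and geodesic (finite $\op{Shapes}(X)$ bounds mirror lengths away from $0$) so that such a geodesic exists.

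For (3) the difference is more substantial: the paper deduces the two-component statement from a Mayer--Vietoris argument applied to $U=X\setminus\W$ and $V=N_{1/2}(\W)$, whereas you construct an explicit ``side map'' $X\setminus\W\to\{\pm\}$ via $\pi_\W$ and the link, and prove convexity of each halfspace directly. Your convexity argument (a geodesic $[x,y]$ meeting $\W$ at $p$ would put $\vec{x},\vec{y}$ in the open star of a single $v_\pm$, which has diameter $<\pi$ in the theta graph) is elegant and actually supplies a detail the paper leaves implicit, namely that the halfspaces are convex, not merely connected. However, there is a genuine gap in the coherence of your side map that you wave off as ``standard CAT(0) bookkeeping'': as written, the ``side'' of $x$ is an element of the two-element set of components of $\lk(\pi_\W(x),X)\setminus\{\text{mirror directions}\}$, and this set varies with $\pi_\W(x)$. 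You must globally and consistently label the two sides (equivalently, trivialize the normal bundle of $\W$, which is exactly what part (2) provides via $N_{1/2}(\W)\cong\W\times(-\frac12,\frac12)$), and then show the resulting map is locally constant --- using continuity of $\pi_\W$ and openness of the two components $N^\pm$ of $N_{1/2}(\W)\setminus\W$ --- and that for $z\in[x,p)$ near $p$ the side of $z$ defined via $\pi_\W$ agrees with the local side determined by the direction $\vec{x}\in\lk(p,X)$. Without this, the convexity argument is circular: you use ``same side implies both directions in the same star'' before establishing that the notion of side is globally well-defined. None of this is hard, but it is precisely the work the Mayer--Vietoris sequence is doing for the paper, and it should be spelled out. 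You should also verify, as your argument implicitly requires, that a geodesic between two points off $\W$ can meet $\W$ in at most one point: if $[x,y]\cap\W$ were a nondegenerate segment $[p_1,p_2]\subset\W$, then at $p_1$ the incoming direction would be at link-distance $\pi$ from a mirror direction, forcing it to be another mirror direction (in the theta graph, mirror midpoints are the only points at distance $\pi$ from a mirror midpoint), so $[x,p_1]$ would lie in $\W$ near $p_1$, contradicting $[x,y]\cap\W=[p_1,p_2]$.
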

\begin{proof}
        (1) Since $\W$ is connected and has finitely many isometry classes of cells, $\W$ is complete and geodesic.
    Hence, because $X$ is CAT(0), it suffices to show that $f$ is a local isometry \cite[Prop. II.4.14]{BridsonHaefliger1999}. 
    For any $x\in \W$, there is a neighborhood $U$ of $x$ such that $f(x)$ is contained in a neighborhood of the form $U\times\left(-\frac{1}{2},\frac{1}{2}\right)$, where $f$ maps $U$ into $U\times \{0\}$ via the identity map. Therefore, $f$ is a local isometry. 
    The fact that $X$ is uniquely geodesic gives us that $f(\W)$ is convex. Since $\W$ is complete, $f(\W)$ is closed in $X$. 

    (2) Since $\W$ is closed and convex, $N_{\frac{1}{2}}(\W)$ is convex. Furthermore, $f$ extends to a local isometry $\overline{f}:\W\times \left(-\frac{1}{2},\frac{1}{2}\right)\to N_{\frac{1}{2}}(\W)$ and since $N_{\frac{1}{2}}(\W)$ is CAT(0), $\overline{f}$ is an isometry. 

    (3) We apply the Mayer-Vietoris sequence to $U=X\setminus \W$ and $V=N_{\frac{1}{2}}(\W)$. Then $V$ is connected and $U\cap V$ is isometric to $\W\times \left(-\frac{1}{2},0\right)\sqcup \W\times \left(0,\frac{1}{2}\right)$. Since $U\cup V=X$ is simply connected, the short exact sequence 
    \[
        H_1(X)\to H_0(U\cap V)\to H_0(U)\oplus H_0(V)\to H_0(X) \to 0
    \]
    becomes
    \[
    0\to \Z^2\to H_0(U)\oplus \Z \to \Z \to 0
    \]
    and so $X\setminus \W$ has two connected components.
\end{proof}

In what follows, the wall $\W$ will always be identified with its image $f(\W)\subset X$. 

\begin{corollary}\label[corollary]{cor:isom_embedding}
    The combinatorial distance (denoted by $d_1$) between vertices $v,w\in X^{(0)}$ is equal to the number of walls in $S(v,w)$, where $S(v,w)$ is the set of walls separating $v$ from $w$.
\end{corollary}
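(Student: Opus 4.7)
The plan is to prove $d_1(v,w) \geq |S(v,w)|$ and $d_1(v,w) \leq |S(v,w)|$ separately. For the lower bound, each edge $e \subset X$ is dual to a unique wall $\W_e$ (the wall of its equivalence class $[e]$), and by \cref{prop:walls_separate}(1) the wall $\W$ meets $X^{(1)}$ precisely at the midpoints of edges in $[e]$; hence a combinatorial path from $v$ to $w$ crosses $\W$ only when traversing an edge dual to $\W$. By \cref{prop:walls_separate}(3), each $\W \in S(v,w)$ must be crossed by such a path, and distinct walls have disjoint dual edge classes, so the path uses at least one distinct edge for each element of $S(v, w)$, giving it at least $|S(v,w)|$ edges.

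For the upper bound, I would induct on $|S(v,w)|$ via the following key lemma: if $v \neq w$, there is an edge $e$ incident to $v$ with $\W_e \in S(v, w)$. Granted this, letting $v'$ be the other endpoint of $e$, the vertices $v, v'$ lie on the same side of every wall $\W \neq \W_e$ (since $e$ is not dual to such $\W$ and hence does not cross it), so $|S(v', w)| = |S(v, w)| - 1$; by induction $d_1(v', w) \leq |S(v, w)| - 1$, and $d_1(v, w) \leq 1 + d_1(v', w) \leq |S(v, w)|$. The base case $|S(v,w)| = 0$ forces $v = w$ by the contrapositive of the lemma.

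The main obstacle is proving the key lemma. I would analyze the initial direction of the CAT(0) geodesic $[v, w]$ in $\lk(v, X)$. If this direction corresponds to a vertex of $\lk(v,X)$, so that the geodesic leaves $v$ along an edge $e$ at $v$, then the geodesic crosses $\W_e$ at the midpoint of $e$; convexity of the closed halfspaces of $\W_e$ (from \cref{prop:walls_separate}(3)) then forces $w$ to lie on the opposite side of $\W_e$ from $v$, so $\W_e \in S(v, w)$. Otherwise the geodesic enters the interior of a 2-cell $P$ containing $v$, and its initial chord runs from $v$ to an exit point $p \in \partial P$ with $p$ not on either of the two edges $e_1, e_2$ of $P$ at $v$. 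A direct planar argument in the regular polygon $P$ shows that the ``wedge'' region bounded by $e_1, e_2$ and the two mirrors of $P$ dual respectively to $e_1$ and $e_2$ contains $v$ but not $p$, so the chord must cross one of these two mirrors; this yields an edge $e \in \{e_1, e_2\}$ with $\W_e \in S(v, w)$ by the same convexity argument. Verifying this wedge-separation property uniformly across all regular $2n$-gons is the only nontrivial piece of work.
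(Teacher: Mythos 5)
Your proof is correct, but follows a genuinely different route from the paper's. The paper proves the upper bound $d_1(v,w)\le \#S(v,w)$ in one global pass: it subdivides the CAT(0) geodesic $[v,w]$ at its finitely many wall-crossings, associates to each open subarc the unique vertex $v_i$ lying in the same halfspaces as that subarc, and observes that consecutive such vertices share a 2-cell in which a subpath crossing exactly the intervening walls can be drawn; concatenation then yields a combinatorial path of length $\#S(v,w)$. You instead induct on $\#S(v,w)$, peeling off one separating edge at a time. Your key lemma---that some edge $e$ at $v$ is dual to a wall of $S(v,w)$---is proved by inspecting the initial direction of $[v,w]$, and the wedge argument is sound: in a regular $2n$-gon the two mirrors through $v$'s edges are diameters meeting at the center, so the wedge containing $v$ meets $\partial P$ only in the halves of $e_1,e_2$ adjacent to $v$, while the exit point lies on $\partial P\setminus(e_1\cup e_2)$, forcing the chord to cross a mirror. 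Two details are worth spelling out: in Case 1, the geodesic cannot leave an edge at an interior point (the link of such a point has no direction at angle $\ge\pi$ from the incoming one except the continuation along the edge), so it really does reach the midpoint; and in Case 2, since a geodesic meets a convex wall in a connected subset, the transversal crossing in $P$ is the unique intersection, putting $w$ on the far side. The tradeoff is that your route requires the polygon-by-polygon wedge verification, whereas the paper's construction is uniform and---more importantly---is reused verbatim in \cref{lem:carrier_isom_emb} to produce a path staying inside $N(\W)^{(1)}$, a refinement your inductive step does not immediately supply.
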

\begin{proof}
    Since any edge path from $v$ to $w$ crosses the walls of $S(v,w)$ at least once, it suffices to check that $d_1(v,w)\leq \# S(v,w)$. 
    Let $\gamma$ be the CAT(0) geodesic from $v$ to $w$. Observe that $\gamma$ crosses every wall in $S(v, w)$ exactly once. Further, only finitely many points along $\gamma$ lie in a wall of $X$.
    Indeed, otherwise there would be a neighbourhood of $X$ where $\gamma$ lies entirely in a wall $\W$. This would imply that $\gamma$ lies entirely in $\W$ by \cref{prop:walls_separate}, contradicting the fact that $\gamma$ is a path between vertices in~$X$.

    In light of this observation, we can partition $\gamma$ into segments $\gamma_1,\dots,\gamma_n$ at the points where $\gamma$ intersects a wall. Now, since the $\gamma_i$ cross no wall, there is a unique vertex $v_i\in X^{(0)}$ contained in the same halfspaces as $\gamma_i$. 
    Then each $v_i$ and $v_{i+1}$ lie in the same common cell, and the walls separating them are exactly those which separate $\gamma_i$ from $\gamma_{i+1}$ in that cell. Thus, there is a path from $v_i$ to $v_{i+1}$ in the cell which crosses said walls exactly once. Concatenating the paths then gives a path from $v_1=v$ to $v_n=w$ of length $\# S(v,w)$.
\end{proof}

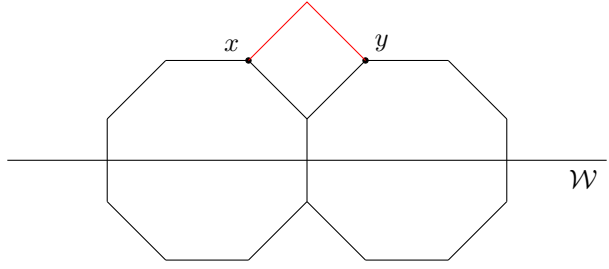
\begin{figure}
    \centering
    \centering
\begin{tikzpicture}
\def\len{1.1}
\def\nq{4}
\def\nl{((2*\nq)/(\nq-2))}

\def\rq{0.5*\len cm / sin{(180/(2*\nq))}}
\def\Rq{0.5*\len cm / tan{(180/(2*\nq))}}

\def\rl{0.5*\len cm / sin{(180/(2*\nl))}}
\def\Rl{0.5*\len cm / tan{(180/(2*\nl))}}

\foreach \k in {1,2,...,7} {
    \coordinate (v\k) at ($({180/(2*\nq)+(\k-1)*180/(\nq)}:\rq) - (\Rq, 0)$);
    \coordinate (q) at ($({180/(2*\nq)+\k*180/(\nq)}:\rq) - (\Rq, 0)$);
    
    \draw (v\k) -- (q);
}

\foreach \k in {0,1,...,7} {
    \coordinate (u\k) at ($({180/(2*\nl)+(\k-1)*180/(\nl)}:\rl) + (\Rl, 0)$);
    \coordinate (q') at ($({180/(2*\nl)+\k*180/(\nl)}:\rl) + (\Rl, 0)$);
    
    \draw (u\k) -- (q');
}

\filldraw (v2) circle (1pt) node [above left] {$x$};
\filldraw (u3) circle (1pt) node [above right] {$y$};

\draw [red] (v2) -- ($(v2)!1!90:(v1)$) -- (u3);

\draw (-3*\Rq, 0) -- (3*\Rl, 0) node [below left] {$\W$};

\end{tikzpicture}
    \unskip
    \caption{$N(\W)$ fails to be convex in both the CAT(0) metric and the combinatorial metric.}
    \label{fig:carriers_not_convex}
\end{figure}

The \textbf{carrier} of a wall $\W$, denoted by $N(\W)$ is defined to be the union of all cells in $X$ intersecting $\W$.  In contrast to cube complexes, the carrier is in general distinct from $N_{\frac{1}{2}}(\W)$, and may fail to be convex both with the CAT(0) metric and the combinatorial metric on $X^{(1)}$ (\cref{fig:carriers_not_convex}).

\begin{lemma}\label[lemma]{lem:carrier_isom_emb}
    The inclusion $N(\W)^{(1)}\hookrightarrow X^{(1)}$ is an isometric embedding.
\end{lemma}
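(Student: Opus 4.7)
The plan is to show $d_{N(\W)^{(1)}}(u,v) \leq d_{X^{(1)}}(u,v)$ for every pair $u,v \in N(\W)^{(0)}$; the reverse inequality is automatic from $N(\W)^{(1)} \subset X^{(1)}$. By \cref{cor:isom_embedding}, $d_{X^{(1)}}(u,v) = \#S(u,v)$, so the task reduces to exhibiting an edge-path in $N(\W)^{(1)}$ from $u$ to $v$ of length exactly $\#S(u,v)$.

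I would proceed by induction on $\#S(u,v)$, with trivial base case $u=v$. For the inductive step, given $u \neq v$, I would produce a neighbor $u' \in N(\W)^{(0)}$ of $u$ in $N(\W)^{(1)}$ with $\#S(u',v) = \#S(u,v)-1$, and then apply the inductive hypothesis to $(u',v)$. Concretely, such a $u'$ is the other endpoint of an edge $[u,u'] \subset C$ in some cell $C \subset N(\W)$ incident to $u$ whose dual wall lies in $S(u,v)$.

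To locate this edge, I would mimic the construction from the proof of \cref{cor:isom_embedding}. Taking the CAT(0) geodesic $\gamma$ from $u$ to $v$, consider the cell $C_0$ first entered by $\gamma$ at $u$ and the first wall $\W_1 \in S(u,v)$ crossed by $\gamma$; then one of the two edges of $C_0$ at $u$ is dual to $\W_1$. If $C_0 \subset N(\W)$, that edge is already in $N(\W)^{(1)}$ and we are done. Otherwise, $\gamma$ leaves $N(\W)$ immediately at $u$, and we must instead find a different cell of $N(\W)$ at $u$ providing an edge dual to some wall of $S(u,v)$.

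The main obstacle is verifying that such a cell of $N(\W)$ always exists. I anticipate resolving this through a case analysis based on whether $u$ and $v$ lie in the same halfspace of $\W$, using convexity of halfspaces (\cref{prop:walls_separate}(3)). When $u$ and $v$ lie in opposite halfspaces, $\W \in S(u,v)$ is itself a natural candidate, and one can reach a $\W$-dual edge through the 1-skeleton of a cell of $N(\W)$ at $u$. In the same-halfspace case every wall of $S(u,v)$ either crosses $\W$ or is trapped in the common halfspace of $u$ and $v$, and I would use the disjointness criterion (\cref{lem:parallel_criterion}) together with the link condition around $u$ to rule out the pathological link configurations in which no edge at $u$ in any cell of $N(\W)$ is dual to a wall of $S(u,v)$.
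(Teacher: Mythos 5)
Your route is genuinely different from the paper's, but it contains a real gap at the step you yourself identify as the main obstacle. The induction requires that at every vertex $u\in N(\W)^{(0)}$ one can find an edge of some cell of $N(\W)$ at $u$ whose dual wall separates $u$ from $v$. When the CAT(0) geodesic $[u,v]$ exits $N(\W)$ immediately at $u$, your sketch does not actually produce such an edge: ``rule out the pathological link configurations'' is an intention rather than an argument, and it is not clear that the disjointness criterion together with the link condition suffice, because the obstruction is not a priori local to $\lk(u,X)$ --- it could happen that every wall of $S(u,v)$ is dual only to edges of cells at $u$ lying outside $N(\W)$. Excluding this requires some global input about how $N(\W)$ sits inside $X$, which your proposal never supplies.

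The paper provides exactly that global input via \cref{prop:walls_separate}(2): $N_{\frac{1}{2}}(\W)$ is convex and isometric to $\W\times\left(-\frac{1}{2},\frac{1}{2}\right)$. One perturbs $v,w$ to nearby points $v',w'\in N_{\frac{1}{2}}(\W)$ lying in the same collections of halfspaces as $v,w$ (possible because $v,w$ are vertices of cells meeting $\W$). Convexity then forces the CAT(0) geodesic $[v',w']$ to stay inside $N_{\frac{1}{2}}(\W)\subset N(\W)$, so each of the segments produced by the construction of \cref{cor:isom_embedding} lies in a cell of $N(\W)$, and the associated vertices and edge path lie in $N(\W)^{(1)}$. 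Since $S(v',w')=S(v,w)$, this path has length $\#S(v,w)$, and no induction or case analysis is needed. The convexity of $N_{\frac{1}{2}}(\W)$ is the key ingredient your proposal is missing; once you invoke it, the inductive step becomes easy, but at that point the paper's two-line argument is already complete.
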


\begin{proof}
    Given vertices $v,w\in N(\W)^{(0)}$, we may choose points $v',w'\in N_{\frac{1}{2}}(\W)$ that are contained in the same halfspaces as $v$ and $w$ respectively. By \cref{prop:walls_separate}(2), the CAT(0) geodesic $[v',w']$ is contained in $N_{\frac{1}{2}}(\W)$ and therefore the construction in the proof of \cref{cor:isom_embedding} applied to $[v',w']$ yields a path in $N(\W)^{(1)}$ from $v$ to $w$ of length $\#S(v,w)$.
\end{proof}

\begin{lemma}\label[lemma]{lem:uniqueness_comb_projections_onto_walls}
    Let $X$ be a $\mathrm{CAT(0)}$ even 2-complex and let $\W$ be a wall in $X$. Let $P\subset X$ be a 2-cell intersecting $\W$, and let $v$ be a vertex of $P$. Then any combinatorial geodesic from $v$ into~$\W$ of length $d_1(v, \W)$ is contained in $P$. 
\end{lemma}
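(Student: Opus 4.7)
My plan rests on the convexity of $P$ and $\W$ (Proposition 3.2) together with the combinatorial identification from Corollary 3.6. The key preliminary observation is that for any two vertices $a, b \in P^{(0)}$, the walls of $X$ separating $a$ from $b$ are exactly the mirrors of $P$ separating them: convexity of $P$ forces any wall that separates $a$ and $b$ to meet $P$, hence to intersect it in a mirror; conversely each such mirror extends to a distinct wall of $X$. Therefore $d_1(a,b)$ equals the cyclic distance of $a,b$ in $P$, and in particular any combinatorial geodesic between vertices of $P$ can be realized inside $P$.

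Next, let $w \in P^{(0)}$ be a vertex on the opposite halfspace of $\W$ from $v$ achieving the minimal cyclic $P$-distance to $v$. The plan is to show $d_1(v, \W) = d_1(v, w)$; one direction is immediate from the previous paragraph. For the other, I would first establish that the CAT(0) projection $\pi_\W(v)$ lies in $M = P \cap \W$: if not, the segment $[v, \pi_\W(v)]$ would exit $P$ through $\partial P \setminus M$, and by the angle characterization of Proposition 3.3 applied to $\pi_M(v)$ (the projection onto the convex segment $M$) one finds a strictly closer point of $\W$ inside $P$, a contradiction. With $\pi_\W(v) \in M$ established, $d(v,\W) = d(v,M)$ is controlled by the in-$P$ geometry of the regular polygon $P$, and any purported shortcut $w' \notin P$ on the opposite halfspace with $d_1(v,w') < d_1(v,w)$ would force $d(v,w') \le d_1(v,w')-1$ to be incompatible with $d(v,w') \ge d(v,\W)$ together with the in-$P$ calculation.

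Once the minimality is in hand, the CAT(0) segment $[v,w]$ lies in $P$ by convexity, and I apply the partition-at-wall-crossings construction from the proof of Corollary 3.6 to $[v,w]$, selecting the common cell at each wall-crossing to be $P$ itself (valid since the segment is entirely in $P$). This produces a combinatorial geodesic of length $d_1(v,\W)$ inside $P$. For the "any" clause, note that any combinatorial geodesic of length $d_1(v,\W)$ from $v$ ends at some vertex $w''$ on the opposite halfspace with $d_1(v,w'') = d_1(v,\W)$; by the minimality step $w'' \in P$, and since $w''$ and $v$ are both in $P$ with $d_1$ equal to their cyclic distance, the geodesic between them may be (and, I expect, must be) taken inside $P$.

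The main obstacle will be cleanly proving $\pi_\W(v) \in M$: the wall extends beyond $P$ through adjacent cells and could a priori come close to $v$ via routes outside $P$. Ruling this out rigorously likely demands either the disjointness criterion (Lemma 3.4) applied to $P$ against pieces of $\W$ in neighboring cells, or a careful case analysis of the possible configurations of how the geodesic $[v,\pi_\W(v)]$ could exit $P$; the truncated piecewise Euclidean structures mentioned in the introduction (Section 4) are likely invoked here to reduce to finitely many configurations.
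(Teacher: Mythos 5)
Your approach is genuinely different from the paper's, and it has two significant gaps that keep it from closing. The paper's proof is a short local argument: using \cref{lem:carrier_isom_emb} to work inside $N(\W)^{(1)}$, one supposes $\gamma$ first exits $P$ along an edge $[u,w]$ with $u\in P$, $w\notin P$. That edge lies in a cell $P'\subset N(\W)$ meeting $\W$ with interior disjoint from $P$; convexity of cells forces $\pi_\W(u)\in P\cap P'$, which must be the midpoint of an edge dual to $\W$, so $u$ is already adjacent to $\W$ and the step to $w$ cannot occur on a geodesic to $\W$. This disposes of \emph{any} offending geodesic at once. You never invoke \cref{lem:carrier_isom_emb} and instead try to characterize the endpoint and the distance $d_1(v,\W)$ globally.

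The first gap is the one you flag yourself: you need $\pi_\W(v)\in M=P\cap\W$, and your sketch (apply \cref{prop:projection_existence_uniqueness} to $\pi_M(v)$ and ``find a strictly closer point of $\W$ inside $P$'') does not actually produce a contradiction — if $\pi_\W(v)\notin P$ there is by assumption no closer point of $\W$ in $P$, so you would need a genuine comparison of $\angle_{\pi_M(v)}(v,\cdot)$ against all directions into $\W$, including branches leaving $P$. The paper sidesteps this by making the analogous claim only at an exit vertex $u$ lying on two cells $P,P'$ that both meet $\W$, which is a more constrained configuration. The second gap is the ``any'' clause: even granting that the terminal vertex $w''$ lies in $P$ and that $d_1(v,w'')$ equals the cyclic distance in $P$, you only establish that \emph{some} combinatorial geodesic from $v$ to $w''$ runs inside $P$. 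The lemma asserts that \emph{every} such geodesic does, and your parenthetical ``(and, I expect, must be)'' is precisely the missing content — it is exactly what the exit-edge argument is for. There is also minor bookkeeping trouble: $d_1(v,\W)$ is a half-integer and $d_1(v,w)$ an integer, so ``$d_1(v,\W)=d_1(v,w)$'' should read $d_1(v,\W)=d_1(v,w)-\tfrac12$, and the bound ``$d(v,w')\le d_1(v,w')-1$'' is not a valid general inequality between the CAT(0) and combinatorial metrics.
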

\begin{proof}
    By \cref{lem:carrier_isom_emb}, it suffices to show that any combinatorial geodesic $\gamma$ in $N(\W)^{(1)}$ from $v$ into $\W$ that is not contained in $P$ has length $>d_1(v, \W)$. Suppose that $u, w\in \gamma$ are adjacent vertices with $u\in P$ and $w\notin P$. Then the edge $[u,w]$ is contained in some cell $P'\subset N(\W)$ intersecting $\W$ whose interior is disjoint from $P$. But since cells in $X$ are convex in the CAT(0) metric, $\pi_\W(u)$ is contained in $P\cap P'$. Therefore, $\pi_\W(u)$ is the midpoint of a common edge of $P$ and $P'$ dual to $\W$, and so $u$ is adjacent to $\W$. But then $d_1(u, \W)\leq d_1(w, \W)$ and thus $\gamma$ has length greater than~$d_1(v, \W)$.
\end{proof}

\section{Truncated piecewise Euclidean structures}
\label{sec:pw_euc_structs}

Given a regular  $2n$-gon of edge length 1, we may take its barycentric subdivision, and endow each triangle in the subdivision with the Euclidean metric coming from the triangles in the barycentric subdivision of a regular $2q$-gon of edge length 1. We call the resulting complex a \textbf{$2q$-truncated $2n$-gon} (see \cref{fig:truncation_example}). 
\begin{figure}
    \centering
    \begin{tikzpicture}
        \def\n{4}
        \def\len{1.5}
            
        \def\r{0.5*\len cm / sin{(180/(2*\n))}}
        \def\R{0.5*\len cm / tan{(180/(2*\n))}}

        \foreach \k in {-4,-3,...,3} {
            \coordinate (p) at ({180/(2*\n)+(\k-1)*180/(\n)}:\r);
            \coordinate (q) at ({180/(2*\n)+\k*180/(\n)}:\r);
            
            \draw[thick] (p) -- (q);
            \draw[gray] (q) -- (0,0) -- ($(p)!0.5!(q)$);
            }

        \coordinate (O) at (0,0);
        \coordinate (a) at (\R,0);
        \coordinate (b) at ({180/(2*\n)}:\r);

        \draw (O) -- (a) -- (b) -- (O);
            
        \path pic ["\small{$\frac{\pi}{6}$}", draw, angle radius=0.4*\len cm, angle eccentricity=1.5] {angle=a--O--b};
        \path pic ["\small{$\frac{\pi}{3}$}", draw, angle radius=0.18*\len cm, angle eccentricity=1.7] {angle=O--b--a};
        \path pic [draw, angle radius=0.1*\len cm, angle eccentricity=1.5] {right angle=O--a--b};
            
        \coordinate (v) at ({180/(2*\n)+(6-1)*180/(\n)}:\r);
        \draw (v) node [below left] {$v$};
        \coordinate (w) at ({180/(2*\n)+(3-1)*180/(\n)}:\r);
        \draw (w) node [above] {$w$};
            
        \draw[red] (v)--(O)--(w);
    \end{tikzpicture}
    \unskip
    \caption[6-truncated octagon]{A 6-truncated octagon. The path from $v$ to $w$ passing through the center is geodesic.}
    \label{fig:truncation_example}
\end{figure}
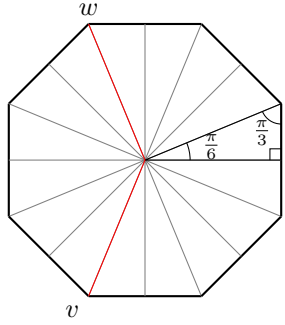
Consider the function $q:\N_{\geq 2}\to\N_{\geq 2}$ defined by 
\begin{equation}\label{eq:truncation_fct}
       q(n) = \begin{cases}
                n & \text{if } n=2,3\\
                4 & \text{if } n=4,5 \\
                6 & \text{if } n \geq 6\\
            \end{cases}
\end{equation}
Given a $\mathrm{CAT(0)}$ even 2-complex $X$, let $X'$ be the complex homeomorphic to $X$ constructed by replacing each $2n$-gon with a $2q(n)$-truncated $2n$-gon. Note that $X$ and $X'$ have isometric 1-skeletons. Taking the pullback of the piecewise Euclidean metric on $X'$ defines a new metric on $X$, called a \textbf{truncated piecewise Euclidean metric} on $X$.

\begin{lemma}[\cite{Munro2021}]
    \label[lemma]{lem:truncation}
    The piecewise Euclidean metric on $X'$ is $\mathrm{CAT(0)}$. Furthermore, walls in $X$ are convex with respect to the truncated piecewise Euclidean metric defined above.
\end{lemma}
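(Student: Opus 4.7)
The plan is to split the lemma into two claims: (i) the piecewise Euclidean metric on $X'$ satisfies the link condition and is therefore CAT(0), and (ii) every wall $\W\subset X$ embeds as a convex subset of $X'$. Since $X'$ has finitely many isometry types of cells it is complete and geodesic, and since it is homeomorphic to the simply connected space $X$ it is simply connected. Hence (i) reduces to a vertex-by-vertex check of the link condition.

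The vertices of $X'$ fall into three classes coming from the barycentric subdivision: original corners of $X$, midpoints of edges, and centers of $2$-cells. The midpoint and center cases are quick. At the midpoint $m$ of an edge shared by $k$ cells, two barycentric triangles per cell meet at $m$, each contributing a right angle, so $\lk(m,X')$ is the complete bipartite graph $K_{2,k}$ with all link-edges of length $\pi/2$; its shortest embedded cycle has length exactly $2\pi$. At the center $c$ of a $2n$-gon, $\lk(c,X')$ is a cycle of length $4n\cdot\pi/(2q(n))=2n\pi/q(n)\geq 2\pi$, the inequality holding because $q(n)\leq n$ by inspection of the definition.

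The main obstacle is the link condition at an original corner $v$. The link of $v$ in $X'$ is a bipartite graph on the edges at $v$ and the cells at $v$, with each link-edge coming from a barycentric corner triangle and carrying length $\pi/2-\pi/(2q(n_i))$, where $2n_i$ is the size of the ambient cell. An embedded cycle of combinatorial length $2k$ therefore has metric length $k\pi-\pi\sum_{i=1}^k 1/q(n_i)$, while the corresponding cycle in $\lk(v,X)$ has length $k\pi-\pi\sum_{i=1}^k 1/n_i\geq 2\pi$ by the link condition in $X$. I need to show $\sum 1/q(n_i)\leq k-2$ whenever $\sum 1/n_i\leq k-2$. For $k\geq 4$ this is immediate because $1/q(n_i)\leq 1/2$. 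For $k=3$, since $1/q(n)$ can exceed $1/n$ (for instance at $n=5$), a case analysis is needed: the extremal solutions of $\sum 1/n_i\leq 1$ include the three Euclidean triples $(2,3,6),(2,4,4),(3,3,3)$, each of which is a fixed point of $q$, and the definition of $q$ has evidently been engineered so that all other admissible triples still satisfy $\sum 1/q(n_i)\leq 1$; I would verify this by tabulating the finitely many cases.

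For claim (ii), I would follow the strategy of \cref{prop:walls_separate}. Let $\W'$ denote the wall complex re-metrized so that each mirror segment has the length it inherits inside the corresponding $2q(n)$-truncated $2n$-gon. Since $\W$ is a tree (it is $1$-dimensional and convex in the CAT(0) space $X$, so any embedded loop would yield two distinct $X$-geodesics) and $\W'$ has the same combinatorial type, $\W'$ is a complete geodesic tree, hence CAT(0). The natural map $f:\W'\to X'$ is trivially a local isometry away from the subdivision vertices. At a midpoint $m$, any two incident mirror directions correspond to two $c$-vertices of $K_{2,k}$, which lie at link-distance $\pi/2+\pi/2=\pi$; at a center of a $2n$-gon, the two ends of the mirror are antipodal vertices of the link-cycle of length $2n\pi/q(n)$, at link-distance $n\pi/q(n)\geq\pi$. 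By \cref{lem:angle_linkpath}, both Alexandrov angles equal $\pi$, so $f$ is a local isometry. Invoking \cite[Prop.~II.4.14]{BridsonHaefliger1999} exactly as in the proof of \cref{prop:walls_separate} then yields the desired isometric embedding with convex image.
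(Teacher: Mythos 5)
Your argument follows essentially the same strategy as the paper: verify the link condition at the three vertex types of $X'$, and show the wall embeds as a convex subset via a local-isometry argument. The one real gap is in the $k=3$ case of the link condition at an original vertex $v$ of $X$, which is the crux of the lemma: you correctly observe that $1/q(n)\leq 1/n$ can fail (e.g.\ $n=5$), so the implication $\sum 1/n_i\leq 1\Rightarrow\sum 1/q(n_i)\leq 1$ is not automatic and a case analysis is required — but you defer the tabulation. That analysis is where the specific form of $q$ earns its keep, and it should be carried out: ordering $n_1\geq n_2\geq n_3$, the conclusion is immediate when $n_3>2$; when $n_3=2$ one has $n_1,n_2\geq 3$, and the split $n_2=3$ (forcing $n_1\geq 6$, so $\sum 1/q(n_i)=\tfrac16+\tfrac13+\tfrac12=1$) versus $n_2\geq 4$ (so $\sum 1/q(n_i)\leq\tfrac14+\tfrac14+\tfrac12=1$) finishes it. Two smaller remarks: your $k\geq 4$ argument via $1/q(n_i)\leq 1/2$ is a clean alternative to the paper's observation that link-edges have length $\geq\pi/4$ so only $6$-cycles need checking (both also rule out $k=2$ since $\lk(v,X)$ is simplicial). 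And for part (ii), your explicit computation — link-distance $\pi/2+\pi/2=\pi$ between any two cell-vertices of $K_{2,k}$ at an edge-midpoint, and link-distance $n\pi/q(n)\geq\pi$ between the antipodal endpoints of the mirror at a cell-center — is a worthwhile unpacking of the paper's terser assertion that geodesics in walls remain geodesic because ``angles at centers are increased since $q(n)\leq n$ and walls remain perpendicular to dual edges.''
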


\begin{proof}
    Since $X'$ is simply connected, it suffices to verify that the link of each vertex in $X'$ has no embedded cycles of length less than $2\pi$. 
    If $v\in X'$ is the midpoint of an edge in $X$, then $\op{Lk}(v,X')$ is isometric to $\op{Lk}(v,X)$.
    If $v\in X'$ is the center of a $2n$-gon, then $\op{Lk}(v,X')$ consists of a single cycle of length $\frac{\pi}{2q(n)}\cdot 4n$, which is at least $2\pi$ because $q(n)\leq n$.
    
    If $v\in X'$ is a vertex of $X$, then edges in $\op{Lk}(v, X')$ all have length at least $\pi/4$. As such, it suffices to look at $k$-cycles for $k<8$. But $k$-cycles in $\op{Lk}(v, X')$ correspond to $\frac{k}{2}$-cycles in $\op{Lk}(v, X)$, and since $X$ is CAT(0), these links are simplicial. Thus any cycle with less than 8 edges in $\op{Lk}(v, X')$ is a $6$-cycle.  Consider the associated $3$-cycle in $\op{Lk}(v,X)$ with edges of length $\frac{n_i-1}{n_i}\pi$ for $i=1,2,3$ (i.e.\ the interior angles of polygons incident to $v$). Then the cycle in $\op{Lk}(v, X')$ is of length
\[
    \frac{q(n_1)-1}{q(n_1)}\pi + \frac{q(n_2)-1}{q(n_2)}\pi +\frac{q(n_3)-1}{q(n_3)}\pi
\]
Therefore, to show that the link condition is satisfied, we need to check that 
\[
        \frac{1}{n_1} + \frac{1}{n_2} + \frac{1}{n_3}\leq 1\qquad\text{implies that}\qquad \frac{1}{q(n_1)} + \frac{1}{q(n_2)} + \frac{1}{q(n_3)}\leq 1
\]
for integers $n_1\geq n_2\geq n_3\geq 2$. If $n_3>2$, then the inequality is easily seen to be satisfied, so we may assume that $n_3=2$. The left hand inequality then gives us that $n_1,n_2\geq 3$. If $n_2=3$, then $n_1\geq 6$, in which case $\frac{1}{q(n_1)} + \frac{1}{q(n_2)} + \frac{1}{q(n_3)}=1$. Lastly, if $n_2\geq 4$, then 
\[
        \frac{1}{q(n_1)} + \frac{1}{q(n_2)} + \frac{1}{q(n_3)}\leq \frac{1}{4} + \frac{1}{4} + \frac{1}{2}=1
\]

Finally, geodesics connecting points in walls of $X$ with respect to the original metric on $X$ remain geodesic in the pullback metric on $X$.
Indeed, angles between segments starting at the center of a cell in $X$ are increased because $q(n)\leq n$, and walls remain perpendicular to their dual edges in~$X$.
\end{proof}

\begin{lemma}\label[lemma]{lem:angles_triangle_equality}
    Let $X$ be a 2-complex and $P\subset X$ be a 2-cell of $X$. Let $v$ be a vertex of $P$ with adjacent vertices $u_1,u_2\in P$. Fix $y\in P$ distinct from $v$ and let $x\in X$ be a point distinct from $v$ such that $[v, x]\cap P=\{v\}$. Then 
    \[
            \angle_v(y,x) = \min\Bigl\{ \angle_v(y,u_1)+\angle_v(u_1, x),\ \angle_v(y,u_2)+\angle_v(u_2, x),\ \pi \Bigr\}
    \]
\end{lemma}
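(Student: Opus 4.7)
The plan is to pass everything to the link of $v$ via \cref{lem:angle_linkpath}, which converts Alexandrov angles at $v$ into shortest-path distances in $\op{Lk}(v, X)$, capped at $\pi$. The statement then becomes a combinatorial claim about shortest paths in a metric graph.

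First I would unpack the link-theoretic picture. The $2$-cell $P$ contributes an edge $\epsilon \subset \op{Lk}(v, X)$ whose endpoints are $\vec{u_1}$ and $\vec{u_2}$; this edge has length equal to the interior angle of $P$ at $v$, which is strictly less than $\pi$. Since $y \in P$, the point $\vec{y}$ lies on $\epsilon$; and since $[v, x] \cap P = \{v\}$, an initial segment of $[v, x]$ immediately leaves $P$, so $\vec{x}$ does not lie in the interior of $\epsilon$. The crucial observation is now purely graph-theoretic: any path in $\op{Lk}(v, X)$ from $\vec{y}$ to $\vec{x}$ must exit $\epsilon$ through one of its endpoints. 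Consequently,
\[
    d_{\op{Lk}}(\vec{y}, \vec{x}) = \min\Bigl\{ d_{\op{Lk}}(\vec{y}, \vec{u_1}) + d_{\op{Lk}}(\vec{u_1}, \vec{x}),\ d_{\op{Lk}}(\vec{y}, \vec{u_2}) + d_{\op{Lk}}(\vec{u_2}, \vec{x})\Bigr\}.
\]

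To finish, I apply \cref{lem:angle_linkpath} three times. Because $\vec{y} \in \epsilon$ and $\epsilon$ has length less than $\pi$, we get $\angle_v(y, u_i) = d_{\op{Lk}}(\vec{y}, \vec{u_i})$ directly (no $\pi$-cap is triggered). The remaining factor $\angle_v(u_i, x) = \min(\pi, d_{\op{Lk}}(\vec{u_i}, \vec{x}))$ may genuinely be clipped at $\pi$, which is where the outer $\min$ with $\pi$ in the stated formula comes in. Substituting and then taking the overall minimum with $\pi$ yields the claimed identity.

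The only subtlety I anticipate is verifying that the inner and outer $\min$s combine correctly, i.e.\ that for each $i$,
\[
    \min\bigl(\pi,\ \angle_v(y, u_i) + \angle_v(u_i, x)\bigr) = \min\bigl(\pi,\ d_{\op{Lk}}(\vec{y}, \vec{u_i}) + d_{\op{Lk}}(\vec{u_i}, \vec{x})\bigr).
\]
This is a short case split on whether $d_{\op{Lk}}(\vec{u_i}, \vec{x}) \leq \pi$ (in which case both sides agree termwise) or $d_{\op{Lk}}(\vec{u_i}, \vec{x}) > \pi$ (in which case both sides are $\pi$, because $\angle_v(y,u_i) \geq 0$). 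Edge cases where $y \in \{u_1, u_2\}$ or $\vec{x} \in \{\vec{u_1}, \vec{u_2}\}$ collapse one of the two candidate sums to $\angle_v(y, x)$ itself, so the equality still holds trivially.
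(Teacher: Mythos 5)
Your proof is correct and takes essentially the same route as the paper's: both reduce the Alexandrov-angle statement to shortest paths in $\lk(v,X)$ via \cref{lem:angle_linkpath}, and both rest on the observation that $\vec{y}$ lies on the link edge $\epsilon$ contributed by $P$, $\vec{x}$ lies outside its interior, and therefore any path between them must pass through one of the endpoints $\vec{u}_i$. You simply make explicit the $\pi$-clipping bookkeeping and the boundary cases, which the paper leaves to the reader.
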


\begin{proof}
    This follows from \cref{lem:angle_linkpath}, as any path in $\lk(v, X')$ between the point $\vec{x}$ corresponding to $[v,x]$ and the vertex $\vec{y}$ corresponding to $[v,y]$ passes through at least one of the vertices  $\vec{u}_i\in \lk(v, X')$ corresponding the edge $[v, u_i]$. 
\end{proof}

\begin{lemma}
    \label[lemma]{lem:bounding_2cells}
    Let $X$ be a $\mathrm{CAT(0)}$ even 2-complex, endowed with a truncated piecewise Euclidean metric.
    With the notation from \cref{lem:angles_triangle_equality}, suppose further that $x$ is a vertex adjacent to $v$, and $\W$ is a wall such that $[v, \pi_\W(v)]$ intersects $P$. 
    If the wall dual to the edge $[v, x]$ intersects $\W$, then one of the triples $u_i, v, x$ bound a $2p$-truncated polygon with $\frac{\pi}{p}>\angle_v(\pi_\W(v), u_i)$.
\end{lemma}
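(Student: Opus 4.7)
The plan is to translate the hypothesis that $\W_x$, the wall dual to $[v,x]$, meets $\W$ into a strict upper bound on an Alexandrov angle at $v$ via the disjointness criterion, and then to extract the polygon $P'$ from the link structure of the truncated metric.

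First, I identify the projection $\pi_{\W_x}(v)$. Because walls remain perpendicular to their dual edges (recorded in the proof of \cref{lem:truncation}), this projection is the midpoint $m$ of $[v,x]$. I then verify that the three points $v$, $\pi_\W(v)$, $m$ are pairwise distinct: $v \neq m$ since $x \neq v$; $v \neq \pi_\W(v)$ since no vertex of $X$ lies on a wall; and $\pi_\W(v) \neq m$ because if they were equal then $[v,\pi_\W(v)] \subset [v,x]$ would meet $P$ only at $v$, contradicting the standing hypothesis. With distinctness secured, the contrapositive of \cref{lem:parallel_criterion} applied to $\W$, $\W_x$, and the basepoint $v$ forces $\angle_v(\pi_\W(v), m) < \pi$, and since $m$ lies on $[v,x]$ this reads $\angle_v(\pi_\W(v), x) < \pi$.

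Next, I apply \cref{lem:angles_triangle_equality}, taking $y$ to be any point of $[v,\pi_\W(v)] \cap P$ other than $v$ (such a point exists by hypothesis and satisfies $\angle_v(y, \cdot) = \angle_v(\pi_\W(v), \cdot)$). Strictness forces at least one sum $\angle_v(\pi_\W(v), u_i) + \angle_v(u_i, x)$ to be strictly less than $\pi$; I fix such an index $i$.

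The decisive step is to extract $P'$. By \cref{lem:angle_linkpath}, $\angle_v(u_i, x)<\pi$ is the length of a shortest path in $\lk(v, X')$ from $\vec{u}_i$ to $\vec{x}$. In the truncated metric every edge of $\lk(v, X')$ has length $\frac{q(n)-1}{q(n)}\pi \geq \pi/2$ because $q(n)\geq 2$ by~\eqref{eq:truncation_fct}, so any path with two or more edges has length at least $\pi$. Since $x\notin P$ gives $\vec{u}_i\neq \vec{x}$, the shortest path must be a single edge, corresponding to a 2-cell $P'$ of $X$ containing both $[v,u_i]$ and $[v,x]$. If $P'$ is a $2n$-gon, set $p=q(n)$; then $P'$ is a $2p$-truncated $2n$-gon and the corresponding link edge has length $\pi-\pi/p$. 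Combining with the previous inequality yields $\angle_v(\pi_\W(v), u_i) < \pi/p$, as required. The main leverage — and the reason the proof requires the truncation rather than the original metric — is this uniform lower bound $\pi/2$ on link-edge lengths; without it, $\vec{u}_i$ and $\vec{x}$ could be joined by a long chain of short edges and the single-edge conclusion would fail.
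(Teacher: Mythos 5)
Your proof takes essentially the same approach as the paper's: apply the disjointness criterion to conclude $\angle_v(\pi_\W(v), x) < \pi$, invoke \cref{lem:angles_triangle_equality} to get $\angle_v(\pi_\W(v), u_i) + \angle_v(u_i, x) < \pi$ for some $i$, and then use the link structure of the truncated metric to force $\vec u_i$ and $\vec x$ to bound a common 2-cell. Your explicit verification of the distinctness hypotheses of \cref{lem:parallel_criterion} is a nice touch that the paper leaves implicit. One small factual slip: you claim that every edge of $\lk(v, X')$ has length $\frac{q(n)-1}{q(n)}\pi \geq \pi/2$, but when $v$ is a vertex of $X$ the barycentric subdivision inserts a link-vertex for each ray $[v,c]$ to a cell center, so the edges of $\lk(v, X')$ have length $\frac{q(n)-1}{2q(n)}\pi$, which is only $\geq \pi/4$ (as stated in the proof of \cref{lem:truncation}). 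The quantity that is $\geq \pi/2$ is the \emph{distance} in $\lk(v, X')$ between two vertices corresponding to edges of $X$, since any path between them must traverse at least one full vertex-angle of a 2-cell. This corrected statement is exactly what the paper uses, and the rest of your argument goes through unchanged: a path from $\vec u_i$ to $\vec x$ through an intermediate edge-vertex would already have length $\geq \pi$, so $\vec u_i$ and $\vec x$ bound a common $2p$-truncated polygon and $\angle_v(u_i,x) = \frac{p-1}{p}\pi$, yielding $\angle_v(\pi_\W(v), u_i) < \frac{\pi}{p}$.
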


\begin{proof}
    Take any point $y\in [v,\pi_\W(v)]\cap P$. If the wall $\W'$ dual to $[v, x]$ intersects $\W$, then our disjointness criterion (\cref{lem:parallel_criterion}) implies that $\angle_v(\pi_\W(v), \pi_{\W'}(v)) = \angle_v(y, x)<\pi$. In particular, by \cref{lem:angles_triangle_equality}, at least one of the $u_i$ satisfies $\angle_v(y, u_i)+\angle_v(u_i, x)<\pi$. But paths between vertices in $\lk(v, X')$ corresponding to edges of $X$ have length at least $\pi/2$. It follows that $\vec{u}_i$ and $\vec{x}$ are adjacent in $\lk(v, X)$, and therefore $u_i,v,x$ bound a $2p$-truncated polygon. In particular,  we obtain $\angle_v(u_i, x)=\frac{p-1}{p}\pi<\pi - \angle_v(y, u_i)$, and so $\angle_v(\pi_\W(v), u_i)=\angle_v(y, u_i)<\frac{\pi}{p}$.
\end{proof}

\begin{example}[Large-type even 2-complexes]
    \label[example]{eg:large_type_par_wall_thm}
    Suppose $X$ is a CAT(0) even 2-complex containing no squares. Then the truncated piecewise Euclidean metric obtained by instead taking $q(n)=3$ for all $n$ still yields a CAT(0) metric.  Here we use this metric instead of the metric from \cref{lem:truncation}. Now, if $u_1 \neq u_2$ are vertices adjacent to a vertex $v$, then $\angle_v(u_1, u_2)\geq \frac{2\pi}{3}$.
    
    Let $\W$  be a wall and consider an edge with endpoints $v\in N(\W)$ and $w\notin N(\W)$.  With the truncated piecewise Euclidean metric the projection $\pi_\W(v)$ is either the midpoint of an edge or the center of a 6-truncated polygon $P$, depending on whether $v$ is adjacent to $\W$ or not. 

    In the latter case, let $u_1,u_2$ be the vertices of $P$ adjacent to $v$. Then $\angle_v(\pi_\W(v), u_i)=\frac{\pi}{3}$, and so the wall dual to the edge $[v,w]$ is disjoint from $\W$ by \cref{lem:bounding_2cells}.
    In the former case, $\angle_v(\pi_\W(v), w)<\pi$ if and only if $\pi_\W(v),v, w$ are contained in a common cell of $X$, which would contradict the assumption that $w\notin N(\W)$. Thus $\angle_v(\pi_\W(v), w)=\pi$, and therefore by \cref{lem:parallel_criterion} the wall dual to the edge $[v,w]$ is disjoint from $\W$.
    Therefore, any vertex outside the carrier of $\W$ is separated from $\W$ by a wall $\W'$. 
    
    Note that by \cref{lem:uniqueness_comb_projections_onto_walls}, any vertex in $N(\W)$ is at combinatorial distance $\leq \frac{N}{2}$, where $2N$ is the largest number of sides of a polygon in $\op{Shapes}(X)$. It thus follows that any vertex with $d_1(x, \W)>\frac{N}{2}$ is separated from $\W$ by another wall in $X$.
\end{example}

\section{The 2-dimensional Parallel Wall Theorem}
\label{sec:proof}

The arguments in the proof of \cref{thm:2D_par_wall} are the same in spirit to those of \cref{eg:large_type_par_wall_thm}. By \cref{lem:uniqueness_comb_projections_onto_walls}, the combinatorial distance between $\W$ and vertices in $N(\W)$ is again uniformly bounded by a constant depending only on $\op{Shapes}(X)$. Thus, we need only restrict our attention to vertices outside the carrier of $\W$. 
In this section, we aim to show that any vertex $x\notin N(\W)$ at combinatorial distance $\geq 5+\frac{1}{2}$ from $\W$ is separated from $\W$ by another wall in $X$.

We first pass to the CAT(0) truncated piecewise Euclidean metric on $X$ determined by \cref{eq:truncation_fct}, so that incident edges in 2-cells of $X$ have Alexandrov angle $\frac{q-1}{q}\pi$ for $q=2, 3, 4$ or $6$. This allows us to control the combinatorics of $X$ by making extensive use of \cref{lem:bounding_2cells}. We also repeatedly apply our disjointness criterion (\cref{lem:parallel_criterion}) in conjuction with the formula for Alexandrov angles obtained in \cref{lem:angles_triangle_equality}.

The first step in our proof is to reduce to the case where a path $\gamma$ from $x$ to $\W$ of length $d_1(x, \W)$ contains exactly 2 vertices of $N(\W)$, say $v_1$ and $v_2$  (\cref{lem:red_k=2_bounding_squares} and \cref{cor:first_cell}). Let now $v_3\notin N(\W)$ be the next vertex in~$\gamma$. 
The next step is to show that if the wall dual to the edge $[v_2,v_3]$ intersects $\W$, then the vertices $v_1, v_2, v_3$ bound either a hexagon or a square (\cref{lem:second_cell}).
Finally, we treat the hexagon and square cases separately in \cref{prop:hex_case} and \cref{prop:square_case}.

\begin{lemma}\label[lemma]{lem:red_k=2_bounding_squares}
    Let $X$ be a $\mathrm{CAT(0)}$ even 2-complex, $\W$ a wall in $X$, and let $P$ be a $2n$-gon contained in the carrier of $\W$. Suppose that $v\in P$ is a vertex at distance $\geq 2+\frac{1}{2}$ from $\W$ and that $w\notin N(\W)$ is a vertex adjacent to $v$. If the wall $\W'$ dual to $[v,w]$ intersects $\W$, then there is a vertex $u\in P$ adjacent to $v$ with $d_1(u,\W)<d_1(v, \W)$ such that $u, v, w$ bounds a square in $X$.
\end{lemma}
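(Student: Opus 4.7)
My plan combines \cref{lem:parallel_criterion} with \cref{lem:bounding_2cells} and a careful angle computation in the truncated link of $v$.

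First, since $\W \cap \W' \neq \emptyset$, the contrapositive of \cref{lem:parallel_criterion} (applied to $A=\W$, $B=\W'$, $x=v$) gives $\angle_v(\pi_\W(v), \pi_{\W'}(v)) < \pi$. As $\pi_{\W'}(v)$ is the midpoint of $[v,w]$, this reads $\angle_v(\pi_\W(v), w) < \pi$. Feeding this into \cref{lem:bounding_2cells} produces an adjacent vertex $u \in \{u_1, u_2\} \subset P$ of $v$ such that $u, v, w$ bound a $2p$-truncated polygon $Q$ and $\pi/p > \angle_v(\pi_\W(v), u)$.

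Next I show $p=2$ by bounding $\angle_v(\pi_\W(v), u)$ from below. \Cref{lem:uniqueness_comb_projections_onto_walls} implies $\pi_\W(v)$ lies in the mirror $M = \W \cap P$ and $[v, \pi_\W(v)] \subset P$. The hypothesis $d_1(v,\W) \geq 2 + \tfrac12$ forces $P$ to be a $2n$-gon with $n \geq 5$ (so $q := q(n) \in \{4,6\}$) and $v = v_k$ with $2 \leq k \leq n-3$. In the truncated metric, $\lk(v, P)$ is the path $\vec u_1 - \vec c_P - \vec u_2$ of total length $(q-1)\pi/q$, and a cone computation at $c_P$ (total angle $2n\pi/q \geq 2\pi$) places $\vec{\pi_\W(v)}$ either at $\vec c_P$ (giving $\angle_v(\pi_\W(v), u_i) = (q-1)\pi/(2q)$ for both $i$) or on the link-edge toward the closer neighbor at offset $k\pi/q$ from $\vec u_{\text{closer}}$. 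A brief enumeration over $(q,k)\in\{(4,2),(6,2),(6,\geq 3)\}$ shows $\angle_v(\pi_\W(v), u) \geq \min(k\pi/q, (q-1)\pi/(2q)) \geq \pi/3$ in every configuration, so $\pi/p > \pi/3$ forces $p < 3$, hence $p=2$ and $Q$ is a square.

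For the distance condition $d_1(u,\W) < d_1(v, \W)$, label $u_1 = u_{\text{closer}}$ with $d_1(u_1,\W)=d_1(v,\W)-1$. When $q=6, k=2$ (the case $\vec{\pi_\W(v)} \neq \vec c_P$), the angle $\angle_v(\pi_\W(v), u_2) = (q-1)\pi/q - \pi/3 = \pi/2$ admits no $p \geq 2$ with $\pi/p > \pi/2$, so \cref{lem:bounding_2cells} must return $u = u_1$. When $\vec{\pi_\W(v)} = \vec c_P$ both angles coincide, so \cref{lem:bounding_2cells} may return either neighbor; if $v$ sits at a symmetric middle position both neighbors satisfy $d_1 < d_1(v,\W)$ and either works. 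The main obstacle is the borderline non-middle subcase, where only $u_1$ has smaller distance yet the link analysis at $v$ is symmetric: here I would re-apply \cref{lem:parallel_criterion} at $u_1$ (or at an interior point of $[u_1,v]$) and use the square bounded by $u_2,v,w$ together with the link condition at $v$ to produce the symmetric square on the $u_1$-side.
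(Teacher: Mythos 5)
Your reduction to $p=2$ is correct and essentially parallels the paper: both arguments hinge on the lower bound $\angle_v(\pi_\W(v), u_i) \geq \pi/3$ (with the exact values $\pi/3$, $\pi/2$ in the $q=6$, $k=2$ pentagon case, and $(q-1)\pi/(2q)$ in the center case), and your identification of the non-center case as forcing $u = u_{\text{closer}}$ is exactly right. The one small slip is the claimed bound $k \leq n-3$; the correct bound coming from \cref{lem:uniqueness_comb_projections_onto_walls} is $k \leq \frac{n-1}{2}$, though this does not affect the angle analysis.

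The genuine gap is in the last paragraph, handling the case $\pi_\W(v) = c_P$ when $v$ is not at the farthest position of $P$. You correctly flag this as "the main obstacle," but the sketched remedy — re-apply the disjointness criterion at $u_1$ or an interior point of $[u_1,v]$ "to produce the symmetric square on the $u_1$-side" — does not work, for two reasons. First, you need to look at the edge toward the \emph{farther} neighbor, not the closer one; the relevant point is the midpoint $\mu$ of $[u_{\text{farther}}, v]$. Second, and more fundamentally, the logic must run the other way: rather than trying to manufacture a square adjacent to the closer neighbor, you should observe that the square adjacent to the farther neighbor is \emph{incompatible} with $\W' \cap \W \neq \emptyset$. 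Concretely, if $u_{\text{farther}}, v, w$ bound a square $C$ with $d_1(u_{\text{farther}}, \W) \geq d_1(v, \W)$, then $\pi_\W$ sends every point of $[u_{\text{farther}}, v]$ to $c_P$; meanwhile $\pi_{\W'}(\mu)$ is the center of $C$, and one computes $\angle_\mu(\pi_\W(\mu), \pi_{\W'}(\mu)) = \pi$. By \cref{lem:parallel_criterion} this forces $\W'$ disjoint from $\W$, a contradiction. So in fact \cref{lem:bounding_2cells} cannot return the farther neighbor in this borderline configuration, and whichever neighbor it does return automatically satisfies $d_1(u, \W) < d_1(v, \W)$. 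No "symmetric square" ever needs to be produced, and the link condition plays no role here.
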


\begin{proof}
    \begin{figure}
    \centering
    \begin{subfigure}{0.45\textwidth}
    \centering
    \begin{tikzpicture}
        \def\n{6}
         \def\len{1.1}
            
        \def\r{0.5*\len cm / sin{(180/(2*\n))}}
        \def\R{0.5*\len cm / tan{(180/(2*\n))}}

        \foreach \k in {0,1,...,3} {
            \coordinate (p\k) at ({180/(2*\n)+(\k-1)*180/(\n)}:\r);
            \coordinate (q) at ({180/(2*\n)+\k*180/(\n)}:\r);
            
            \draw (p\k) -- (q);
        }

        \draw  ({-0.7*\R}, 0) -- ({1.5*\R}, 0) node[below left] {$\mathcal{W}$};

        \coordinate (m) at (\R, 0);
        \coordinate (C) at (0,0);

        \filldraw (m) circle (1.5pt) node [above right] {$m$};
        \filldraw (C) circle (1pt) node [below] {$c$};

        \coordinate (u1) at (q);
        \coordinate (v) at (p3);
        \coordinate (u2) at (p2);
        \coordinate (u3) at (p1);
        
        \filldraw (u1) circle (1pt) node [above left] {$u_1$};
        \filldraw (v) circle (1.5pt) node [above left] {$v$};
        \filldraw (u2) circle (1.5pt) node [above right] {$u_2$};
        \filldraw (u3) circle (1.5pt) ;

        \coordinate (w) at ($(v)!1!100:(u2)$);
        \filldraw (w) circle (1.5pt) node [right] {$w$};

        \draw [ultra thick] (w)--(v)--(u2)--(u2)--(p1)--(m); 

        \coordinate (m') at ($(v)!0.5!(w)$);
        \coordinate (W'a) at ($(m')!2!-90:(v)$);
        \coordinate (W'b) at ($(m')!4.5!90:(v)$);
        \draw (W'a)--(W'b) node [above=3pt] {$\W'$};
    
        \coordinate (pv) at ($(-1,0)!(v)!(1,0)$);
        \draw[dashed] (v) -- (pv);

        \path (C) -- (m) -- (u3)
            pic [draw, color=darkgray, angle radius=0.15*\len cm] {right angle=C--m--u3};
        \path (m) -- (u3) -- (u2)
            pic ["\small{$\frac{5\pi}{6}$}", draw, color=darkgray, angle radius=0.2*\len cm, angle eccentricity=2] {angle=u2--u3--m};
        \path (u3) -- (u2) -- (v)
            pic ["\small{$\frac{5\pi}{6}$}", draw, color=darkgray, angle radius=0.2*\len cm, angle eccentricity=2] {angle=v--u2--u3};
        \path (u2) -- (v) -- (pv)
            pic ["\small{$\frac{\pi}{3}$}", draw, color=darkgray, angle radius=0.225*\len cm, angle eccentricity=2] {angle=pv--v--u2};
        \path (v) -- (pv) -- (m)
            pic [draw, color=darkgray, angle radius=0.15*\len cm, angle eccentricity=2] {right angle=m--pv--v};
    \end{tikzpicture}
    \caption{Case where $\pi_\W(v)$ is not the center of $P$}
    \label{fig:red_to_k2_case1}
\end{subfigure}
\begin{subfigure}{0.45\textwidth}
    \centering
    \begin{tikzpicture}
        \def\n{6}
        \def\len{1.2}
            
        \def\r{0.5*\len cm / sin{(180/(2*\n))}}
        \def\R{0.5*\len cm / tan{(180/(2*\n))}}

        \coordinate (v) at ({180/(2*\n)+2*180/(\n)}:\r);
        \coordinate (u1) at ({180/(2*\n)+3*180/(\n)}:\r);
        \filldraw (v) circle (1.5pt) node [above right] {$v$};
        \filldraw (u1) circle (1pt) node [above left] {$u_1$};
        \filldraw ({180/(2*\n)}:\r) circle (1.5pt);
        \filldraw ({180-180/(2*\n)}:\r) circle (1pt);

        \coordinate (m) at (\R, 0);
        \coordinate (m') at (-\R, 0);
        \coordinate (C) at (0,0);
        
        \draw ({180/(2*\n) - 180/(\n)}:\r)
                --({180/(2*\n)}:\r);
        \draw ({180-180/(2*\n) + 180/(\n)}:\r)
                --({180-180/(2*\n)}:\r);
        \draw (v)--(u1);
        \draw [ultra thick] (m) -- ({180/(2*\n)}:\r) arc ({180/(2*\n)}:{180/(2*\n)+2*180/(\n)}:\r);
        \draw ({180-180/(2*\n)}:\r) arc ({180-180/(2*\n)}:{180/(2*\n)+3*180/(\n)}:\r);

        \draw  ({-1.5*\R}, 0) -- ({1.5*\R}, 0) node[below left] {$\mathcal{W}$};

        \filldraw (m) circle (1.5pt) node [above right] {$m$};
        \filldraw (m') circle (1pt) node [above left] {$m'$};
        
        \coordinate (w) at ($(v)!1!-90:(u1)$);
        \coordinate (w') at ($(u1)!1!90:(v)$);
        
        \draw (w)--(w')--(u1);
        \draw [ultra thick] (w)--(v);
        \filldraw (w) circle (1.5pt) node [right] {$w$};

        \coordinate (mv) at ($(v)!0.5!0:(w) + (\R,0)$);
        \coordinate (mu1) at ($(u1)!0.5!0:(w')-(\R,0)$);
        
        \draw (mu1) -- (mv) node [above left] {$\W'$};

        \coordinate (mu) at ($(v)!0.5!0:(u1)$);
        \filldraw (mu) circle (1pt) node [below right] {$\mu$};

        \draw[dashed] (C)--(mu)--(0, {\R+0.5*\len cm});

        \draw[gray] (u1)--(C)--(v);
        \path (v) -- (C) -- (m)
            pic ["\small{$\geq\! \frac{\pi}{2}$}", draw, color=darkgray, angle radius=0.25*\len cm, angle eccentricity=2] {angle=m--C--v};
        \path (u1) -- (C) -- (m')
            pic ["\small{$\geq\!\frac{\pi}{2}$}", draw, color=darkgray, angle radius=0.25*\len cm, angle eccentricity=2] {angle=u1--C--m'};

        \filldraw (C) circle (1pt) node [below] {$c$};
        \end{tikzpicture}
    \caption{Case where $\pi_\W(v)$ is the center of $P$}
    \label{fig:red_to_k2_case2}
\end{subfigure}
    \unskip
    \caption{Proof of \cref{lem:red_k=2_bounding_squares}}
    \label{fig:red_k=2_bounding_squares}
    \end{figure}
    Let $d_1(v, \W)=k+\frac{1}{2}$ for some $k\in \N_{\geq 2}$, and let $u_1,u_2$ be the vertices of $P$ adjacent to $v$. Suppose further that $d_1(u_1, \W)\geq d_1(u_2, \W)$. 
    By \cref{lem:uniqueness_comb_projections_onto_walls}, $d_1(v, \W)$ is achieved by a path $\gamma$ in $P$.
    In particular, $d_1(u_2, \W)<d_1(v, \W)$ and $k\leq \frac{n-1}{2}$. Since $k\geq 2$, we have $q(n)=4$ or $6$.
Let now $c$ be the center of~$P$, let $m$ be the midpoint of the edge in $\gamma$ intersecting $\W$ and let $m'$ be the midpoint of the other edge in $P$ intersecting $\W$. 
Then $\angle_c(v, m)=\min\Bigl\{(k+\frac{1}{2})\frac{\pi}{q(n)},\ \pi\Bigr\}$ and 
\[ 
\angle_c(v, m') = \min\Bigl\{\tfrac{n\pi}{q(n)}-(k+\tfrac{1}{2})\tfrac{\pi}{q(n)},\ \pi\Bigr\} \geq \tfrac{n\pi}{2q(n)}\geq \tfrac{\pi}{2}.
\]
Thus by \cref{prop:projection_existence_uniqueness}, $\pi_\W(v)$ is the center of $P$ if and only if $k\geq \frac{q(n)-1}{2}$.

If $\pi_\W(v)$ is not the center of $P$, then $k=2,q(n)=6$, and the vertices of $\gamma$, together with $m$ and $\pi_\W(v)$, bound a 5-sided Euclidean polygon (\cref{fig:red_to_k2_case1}). Since the sum of angles in a Euclidean pentagon is $3\pi$, we obtain
\[
    \angle_v(\pi_\W(v), u_2) = 3\pi-2\cdot\tfrac{5\pi}{6}-2\cdot\tfrac{\pi}{2}=\tfrac{\pi}{3}
\]
and $\angle_v(\pi_\W(v), u_1)=\tfrac{5\pi}{6}-\angle_v(\pi_\W(v), u_2)=\frac{\pi}{2}$. It then follows by \cref{lem:bounding_2cells} that $u_2, v, w$ bound a square, as desired.

If $\pi_\W(v)$ is the center of $P$, then $[v, \pi_\W(v)]$ is an edge of $X'$, and so $\angle_v(\pi_\W(v), u_i)=\frac{q(n)-1}{2q(n)}\pi> \frac{\pi}{3}$ (since $q(n)=4$ or 6). By  \cref{lem:bounding_2cells}, one of the $u_i, v, w$ bound a square. It remains to show that if $u_1, v, w$ bound a square~$C$, then $d_1(u_1, \W)<d_1(v, \W)$. Indeed, otherwise $\pi_\W(u_1)=c$ and thus the projection of every point in the edge $[v, u_1]$ onto $\W$ is also $c$ (\cref{fig:red_to_k2_case2}). 
But then taking $\mu$ to be the midpoint of the edge $[u_1, v]$, we see that $\pi_{\W'}(\mu)$ is the center of the square $C$ and  thus $\angle_\mu(\pi_\W(\mu), \pi_{\W'}(\mu))=\pi$, contradicting the assumption that $\W'$ intersects $\W$.
\end{proof}

\begin{corollary}\label[corollary]{cor:first_cell}
    Suppose $x\notin N(\W)$ is a vertex that is not separated from $\W$ by another wall of $X$.
    Then there is a combinatorial geodesic $\gamma$ from $x$ to $\W$ of length $d_1(x, \W)$ containing exactly two vertices of $N(\W)$.
\end{corollary}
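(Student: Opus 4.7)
Over all combinatorial geodesics from $x$ to $\W$, I choose $\gamma = v_0,v_1,\ldots,v_{n-1},m$ (with $v_0 = x$, $m\in\W$, $d_1(x,\W)=n-\tfrac12$) that minimises the number of vertices in $N(\W)$; the goal is to show this minimum equals $2$. Let $v_j$ be the first vertex of $\gamma$ in $N(\W)$. By \cref{lem:uniqueness_comb_projections_onto_walls}, the tail $v_j,\ldots,v_{n-1}$ is contained in a single 2-cell $P$ of $N(\W)$, so $\gamma$ has exactly $n-j$ vertices in $N(\W)$ and $d_1(v_j,\W)=n-j-\tfrac12$. The substance of the proof is to rule out $n-j\ge 3$; the degenerate case $n-j=1$ is handled by a short separate argument that re-routes the tail inside the cell adjacent to $\W$ to introduce a second $N(\W)$-vertex.

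Assume $n-j\ge 3$, so $d_1(v_j,\W)\ge\tfrac52$. I first show that the wall $\W'$ dual to the edge $[v_{j-1},v_j]$ meets $\W$: if not, then $\W'\neq\W$ (else $v_{j-1}\in N(\W)$) and $\W'$ cannot enter $P$ (else, being distinct from $\W$, it would meet $\W$ at the centre of $P$); hence $P$ lies on the $v_j$-side of $\W'$ and so does the connected set $\W$. By \cref{cor:isom_embedding} the initial segment $v_0,\ldots,v_{j-1}$ of $\gamma$ lies on the opposite side of $\W'$ (since $\gamma$ crosses each wall at most once), so $\W'$ separates $x$ from $\W$, contradicting the hypothesis. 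Now \cref{lem:red_k=2_bounding_squares} applies to $(v,w)=(v_j,v_{j-1})$, producing a vertex $u\in P$ adjacent to $v_j$ with $d_1(u,\W)=d_1(v_j,\W)-1$ (equality by the triangle inequality) and a square $C$ with three consecutive corners $u,v_j,v_{j-1}$; let $u'$ be its fourth corner. Replacing the subpath $v_{j-1},v_j$ with $v_{j-1},u',u$ and continuing along the in-$P$ geodesic from $u$ to $\W$ given by \cref{lem:uniqueness_comb_projections_onto_walls} produces a new combinatorial geodesic $\gamma'$ from $x$ to $\W$ of the same length.

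If $u'\notin N(\W)$, then $\gamma'$ contains only $n-j-1$ vertices of $N(\W)$, contradicting the minimality of $\gamma$. The main obstacle is the case $u'\in N(\W)$: since $C\cap\W=\varnothing$ (otherwise $v_{j-1}\in N(\W)$), the cell of $N(\W)$ witnessing $u'\in N(\W)$ is some $Q\neq C$, and the wall dual to $[v_{j-1},u']$ coincides with the wall dual to the opposite edge $[v_j,u]$ of $C$, which cannot be $\W$ (else $v_{j-1}\in N(\W)$) and therefore meets $\W$ at the centre of $P$. I plan to reapply \cref{lem:red_k=2_bounding_squares} to $(u',v_{j-1})$, producing a new square $C^{*}$ sharing the edge $[v_{j-1},u']$ with $C$, and to iterate; the resulting fan of squares around $v_{j-1}$ must terminate at a fourth corner outside $N(\W)$ --- either via a well-chosen secondary invariant (e.g.\ also minimising $j$, or the quantity $\sum_i d_1(v_i,\W)$ along $\gamma$) or via a link-angle argument at $v_{j-1}$ --- thereby contradicting the minimality of $\gamma$ and establishing $n-j\le 2$.
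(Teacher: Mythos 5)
Your overall strategy matches the paper's: pick a combinatorial geodesic $\gamma$ minimising the number of $N(\W)$-vertices, use \cref{lem:uniqueness_comb_projections_onto_walls} to locate all of them in one cell $P$, and invoke \cref{lem:red_k=2_bounding_squares} to rule out three or more. Your argument that the wall $\W'$ dual to $[v_{j-1},v_j]$ must meet $\W$ is correct (a geodesic crosses each separating wall exactly once, so a wall dual to one of its edges and disjoint from $\W$ would separate $x$ from $\W$). However, there is a genuine gap where you acknowledge one: you do not handle the case $u'\in N(\W)$, only sketching a ``plan'' to iterate that you do not (and I believe cannot straightforwardly) close. This case is exactly where the real work lies, and it is not hard to resolve without iteration. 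Suppose $u'\in Q$ for some $2$-cell $Q$ with $Q\cap\W\neq\varnothing$. Since $d_1(u',\W)=d_1(v_{j-1},\W)-1=d_1(u,\W)+1$, the path $u',u,\dots,m'$ (where $u,\dots,m'$ is the in-$P$ geodesic from $u$ onto $\W$) is a combinatorial geodesic from $u'$ onto $\W$, so by \cref{lem:uniqueness_comb_projections_onto_walls} it lies entirely in $Q$; in particular $u,m'\in P\cap Q$. But $P\cap Q$ is closed and convex, hence contains the $\mathrm{CAT(0)}$ geodesic $[u,m']$, which (because $n-j\geq 3$ forces $u$ and $m'$ to lie on non-adjacent sides of $P$ and interior angles of $P$ are $<\pi$) passes through the interior of $P$, disjoint from $Q\neq P$ --- a contradiction, because $u'\in Q\setminus P$ shows $Q\neq P$. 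Hence $u'\notin N(\W)$, and $\gamma'$ contradicts minimality.

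A second, smaller issue: your treatment of the degenerate case $n-j=1$ (``re-route the tail to introduce a second $N(\W)$-vertex'') is not the right move and cannot work as stated, since a geodesic of length $\tfrac12$ from $v_{n-1}$ into $\W$ has no slack to re-route. The paper instead observes (following the argument in \cref{eg:large_type_par_wall_thm}) that when $d_1(v_{n-1},\W)=\tfrac12$ and $v_{n-2}\notin N(\W)$, one has $\angle_{v_{n-1}}(\pi_\W(v_{n-1}),v_{n-2})=\pi$, so by \cref{lem:parallel_criterion} the wall dual to $[v_{n-2},v_{n-1}]$ is disjoint from $\W$ and therefore separates $x$ from $\W$, contradicting the hypothesis. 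That is the contradiction ruling out $n-j=1$; you should replace your re-routing sketch with this separation argument.
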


\begin{proof}
  Let $\gamma$ be a path from $x$ to $\gamma$ of length $d_1(x, \W)$ that has a minimal number of vertices in $N(\W)$. By \cref{lem:uniqueness_comb_projections_onto_walls}, these vertices are contained in a common 2-cell $P$. If $\gamma$ contained more than 2 vertices of $P$, then \cref{lem:red_k=2_bounding_squares} would contradict the minimality of $\gamma$. If instead $\gamma$ contained exactly one vertex of $P$, then, by the same argument as in \cref{eg:large_type_par_wall_thm}, the vertex $x$ would be separated from $\W$ by a wall dual to an edge in $\gamma$.
\end{proof}

Let us now fix some notation:
Let $\gamma$ be the combinatorial geodesic from $x$ to $\W$ obtained by \cref{cor:first_cell} and enumerate its vertices as $v_1,\dots,v_N=x$ in ascending order with respect to $d_1(v_j, \W)$.
In particular, $v_1$ and $v_2$ bound a $2q$-truncated polygon $P_q\subset N(\W)$ with $q>2$ and $v_3\notin N(\W)$.
We denote by $\W_j$ the wall dual to the edge $[v_j,v_{j+1}]$.
For convenience, we also define $\alpha \lplus \beta :=\min\{\alpha+\beta, \pi\}$.

\begin{figure}
    \centering
    \begin{tikzpicture}
        \def\n{4.5}
        \def\len{1.5}
            
        \def\r{0.5*\len cm / sin{(180/(2*\n))}}
        \def\R{0.5*\len cm / tan{(180/(2*\n))}}

        \foreach \k in {0,1,...,2} {
            \coordinate (p\k) at ({180/(2*\n)+(\k-1)*180/(\n)}:\r);
            \coordinate (q) at ({180/(2*\n)+\k*180/(\n)}:\r);
            
            \draw (p\k) -- (q);
            \filldraw (q) circle (1pt);
        }

        \filldraw (p1) circle (1.5pt) node [below right] {$v_1$};
        \filldraw (p2) circle (1.5pt) node [above left] {$v_2$};
        \filldraw (q) circle (1pt) node [left] {$u_2$};

        \draw  ({-0.5*\R}, 0) -- ({1.9*\R}, 0) node[below] {$\mathcal{W}$};

        \coordinate (u') at ($(p1)!1!100:(p0)$);
        \draw (p1) -- (u');
        \filldraw (u') circle (1pt) node [below] {$u_1$};
        
        \coordinate (v3) at ($(p2)!1!130:(p1)$);
        \draw [ultra thick] (\R,0) -- (p1) -- (p2) -- (v3);
        \filldraw (v3) circle (1.5pt) node [left] {$v_3$};

        \coordinate (m2) at ($(p2)!0.5!130:(p1)$);
        \coordinate (W2a) at ($(m2)!2!90:(v3)$);
        \coordinate (W2b) at ($(m2)!2.5!-90:(v3)$);
        \draw (W2a) -- (W2b) node [right] {$\W_2$};
        
        \coordinate (pv2) at ($(-1,0)!(p2)!(1,0)$);
        \draw [dashed] (p2)--(pv2);

        \coordinate (pv1) at ($(W2a)!(p1)!(W2b)$);
        \draw [dashed] (p1)--(pv1)  node [above, gray] {$P_p$};

        \path (pv2)--(p2)--(p1)
            pic ["\small{$\frac{\pi}{q}$}", draw, color=darkgray, angle radius=0.25*\len cm, angle eccentricity=1.7] {angle=pv2--p2--p1};

            \path (pv2)--(p2)--(q)
            pic ["\small{$\frac{q-2}{q}\pi$}", draw, color=darkgray, angle radius=0.3*\len cm, angle eccentricity=1.7] {angle=q--p2--pv2};

        \path (p2)--(p1)--(p0)
            pic ["\small{$\frac{q-1}{q}\pi$}", draw, color=darkgray, angle radius=0.15*\len cm, angle eccentricity=2.7] {angle=p2--p1--p0};

        \path (p2)--(p1)--(pv1)
            pic ["\small{$\frac{\pi}{p}$}", draw, color=darkgray, angle radius=0.25*\len cm, angle eccentricity=1.7] {angle=pv1--p1--p2};

        \path (pv1)--(p1)--(u')
            pic ["\small{$\frac{p-1}{p}\pi$}", draw, color=darkgray, angle radius=0.3*\len cm, angle eccentricity=1.7] {angle=u'--p1--pv1};

        \coordinate (C) at (0,0) node [below, gray] {$P_q$};
    \end{tikzpicture}
    \unskip
    \caption{The proof of \cref{lem:second_cell}}
    \label{fig:second_cell}
\end{figure}

\begin{lemma}\label[lemma]{lem:second_cell}
    We may  further choose $\gamma$ so that $v_1, v_2, v_3$ bound either a square or a hexagon $P_p$.
\end{lemma}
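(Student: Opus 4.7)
The plan is to first show that the wall $\W_2$ dual to $[v_2, v_3]$ must intersect $\W$, then apply \cref{lem:bounding_2cells} at $v_2$ to identify the candidate cell $P_p$, and finally rule out the remaining octagonal/decagonal possibility via a link computation at $v_1$.

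I would first observe that $\W_2 \cap \W \neq \emptyset$. Otherwise $\W_2$ would separate the halfspace containing $v_3, v_4, \dots, v_N = x$ from the halfspace of $v_2$; since $v_2 \in N(\W)$, the latter halfspace contains $\W$ entirely, so $\W_2$ would separate $x$ from $\W$, contradicting the standing assumption on $x$. Then I would apply \cref{lem:bounding_2cells} at $v_2$ with $P = P_q$ and the point $v_3$ in the role of $x$. Writing $w$ for the neighbor of $v_2$ in $P_q$ distinct from $v_1$, the truncated geometry of $P_q$ yields $\angle_{v_2}(\pi_\W(v_2), v_1) = \pi/q$ and $\angle_{v_2}(\pi_\W(v_2), w) = (q-2)\pi/q$. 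So one of the triples $\{v_1, v_2, v_3\}$ (Case A) or $\{w, v_2, v_3\}$ (Case B) bounds a $2p$-truncated polygon with $p < q$ or $p < q/(q-2)$ respectively. Case B is consistent only with $q = 3$ and $p = 2$; in that event I would modify $\gamma$ by replacing $v_1$ with $w$, obtaining a geodesic of the same length because $P_q$ is then a hexagon and both neighbors of $v_2$ in it are adjacent to $\W$ via opposite wall-dual edges. The new first three vertices $w, v_2, v_3$ bound the required square.

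It remains to handle Case A. The bound $p < q$ forces $p \in \{2, 3\}$ whenever $q \in \{3, 4\}$. The only remaining configuration is $q = 6$ with $p = 4$, and ruling this case out is the main obstacle. I would proceed by computing $\angle_{v_1}(\pi_\W(v_1), \pi_{\W_2}(v_1))$ via \cref{lem:angle_linkpath}. The point $\pi_\W(v_1)$ corresponds in $\op{Lk}(v_1, X')$ to the vertex of the wall-dual edge $[v_1, u_1]$ of $P_q$, while $\pi_{\W_2}(v_1)$ lies on the link-edge between $[v_1, v_2]$ and $[v_1, v_0]$ coming from $P_p$, at distance $\pi/p = \pi/4$ from $[v_1, v_2]$ and $(p-2)\pi/p = \pi/2$ from $[v_1, v_0]$. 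The link path through $[v_1, v_2]$ has length $\pi/p + (q-1)\pi/q = \pi/4 + 5\pi/6 = 13\pi/12 > \pi$. Any link path via $[v_1, v_0]$ has length $\pi/2 + d$, where $d$ is the link-distance from $[v_1, v_0]$ to $[v_1, u_1]$; since every link-edge in a truncated even 2-complex has length at least $\pi/2$ (the minimal interior angle, attained by squares), we have $d \geq \pi/2$, so this path has length $\geq \pi$. Hence the shortest link path has length $\geq \pi$, and $\angle_{v_1}(\pi_\W(v_1), \pi_{\W_2}(v_1)) = \pi$. But then \cref{lem:parallel_criterion} forces $\W \cap \W_2 = \emptyset$, contradicting the first step. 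So $p \in \{2, 3\}$ as required.
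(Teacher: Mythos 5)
Your proof is correct and takes essentially the same route as the paper. You first record that $\W_2$ must meet $\W$, apply \cref{lem:bounding_2cells} at $v_2$ with $P=P_q$ to get the two candidate cells, dispose of the $\{w,v_2,v_3\}$ case (which only survives with $q=3$, $p=2$) by the same swap of $v_1$ with the other $\W$-adjacent neighbor of $v_2$ that the paper performs, and then kill the remaining case by computing $\angle_{v_1}(\pi_\W(v_1),\pi_{\W_2}(v_1))$ via link distances. The paper packages this last step in a single inequality valid for all $(q,p)$ with $p<q$ rather than isolating $q=6$, $p=4$, and it routes the angle computation through \cref{lem:angles_triangle_equality} instead of \cref{lem:angle_linkpath} directly, but those are the same computation: your "$\pi/2 + d$ with $d\ge\pi/2$" is exactly the paper's $\angle_{v_1}(\pi_\W(v_1),u_1)\lplus\frac{p-2}{p}\pi \ge \pi$. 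One small point you leave implicit (as does the paper) when asserting $d\ge\pi/2$: one must know that the wall-dual edge $[v_1,u_1]$ of $P_q$ and the edge $[v_1,v_0]$ of $P_p$ are genuinely distinct; this follows because $P_p\neq P_q$ and links are simplicial under the CAT(0) link condition.
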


\begin{proof}
    Let $u_2\neq v_1$ be the second vertex in $P_q$ adjacent to~$v_2$. Then $\angle_{v_2}(\pi_\W(v_2), u_2)=\frac{q-2}{q}\pi\geq \frac{\pi}{2}$ unless $q=3$, or equivalently, unless $P_q$ is hexagon (\cref{fig:second_cell}). Therefore, up to interchanging $v_1$ and $u_2$ in $\gamma$, we may assume by \cref{lem:bounding_2cells} that $v_1, v_2, v_3$ bound a $2p$-truncated polygon $P_p$ for some~$p<q\leq 6$.
Let $u_1\neq v_2$ be the other vertex of $P_p$ adjacent to $v_1$. Then by  \cref{lem:angles_triangle_equality}, 
\[
    \angle_{v_1}(\pi_\W(v_1), \pi_{\W_2}(v_1))=\min\Bigl\{\tfrac{q-1}{q}\pi\lplus \tfrac{\pi}{p},\ \angle_{v_1}(\pi_\W(v_1), u_1)\lplus \tfrac{p-2}{p}\pi\Bigr\} = \angle_{v_1}(\pi_\W(v_1), u_1)\lplus \tfrac{p-2}{p}\pi
\]
Since $\angle_{v_1}(\pi_\W(v_1), u_1)\geq \frac{\pi}{2}$ and $\W_2$ intersects $\W$, we conclude by that $p<4$, i.e.\ that $P_p$ is either a hexagon or a square.
\end{proof}

\begin{proposition}[Hexagon case]\label[proposition]{prop:hex_case}
    Suppose $P_p$ is a hexagon. Then $d_1(x, \W)\leq 5+\frac{1}{2}$.
\end{proposition}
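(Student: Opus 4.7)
The plan is to argue by contradiction: suppose $d_1(x, \W) \geq 6+\tfrac{1}{2}$, so that $\gamma$ contains vertices $v_1, \dots, v_7$. Since $x$ is not separated from $\W$ by any wall, each $\W_j$ for $j=1,\dots,6$ must intersect $\W$. Applying \cref{lem:parallel_criterion} to $\W$ and $\W_j$ at the vertex $v_j$ (noting that $\pi_{\W_j}(v_j)$ is the midpoint of $[v_j, v_{j+1}]$, so it has the same direction as $v_{j+1}$) translates each non-separation hypothesis into the angle condition $\angle_{v_j}(\pi_\W(v_j), v_{j+1}) < \pi$.

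First I would establish the baseline angle at $v_2$. Since $\pi_\W(v_2)$ lies in $P_q$ on the opposite side of the common edge $[v_1, v_2]$ from the hexagon $P_p$, and the interior angle of $P_p$ at $v_2$ is $\tfrac{2\pi}{3}$, \cref{lem:angles_triangle_equality} applied in $P_p$ yields
\[
    \angle_{v_2}(\pi_\W(v_2), v_3) = \tfrac{2\pi}{3} \lplus \angle_{v_2}(\pi_\W(v_2), v_1).
\]
From the proof of \cref{lem:second_cell}, $\angle_{v_2}(\pi_\W(v_2), v_1) < \tfrac{\pi}{3}$, so $\angle_{v_2}(\pi_\W(v_2), v_3)$ is strictly less than $\pi$ but at least $\tfrac{2\pi}{3}$, leaving little room for further iterations.

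I would then iterate along $v_3, v_4, v_5, v_6$. At each $v_j$ with $j \geq 3$, the CAT(0) projection $\pi_\W(v_j)$ lies in a specific cell adjacent to $v_j$ (determined by continuity of $\pi_\W$ together with the preceding step). Applying \cref{lem:bounding_2cells} at $v_j$ identifies a $2p_j$-truncated cell $Q^{(j)}$ containing the triple $v_{j-1}, v_j, v_{j+1}$, with $\tfrac{\pi}{p_j} > \angle_{v_j}(\pi_\W(v_j), v_{j-1})$ and $p_j \in \{2,3,4,6\}$. By \cref{lem:angles_triangle_equality} applied within $Q^{(j)}$, the next-step angle $\angle_{v_{j+1}}(\pi_\W(v_{j+1}), v_{j+2})$ equals $\tfrac{p_j-1}{p_j}\pi \geq \tfrac{\pi}{2}$ summed (with $\lplus$) with the residual angle $\angle_{v_{j+1}}(\pi_\W(v_{j+1}), v_j)$. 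A case analysis on the possible sequences $(p_3, p_4, p_5, p_6)$ shows the running angle must saturate to $\pi$ no later than at $v_6$, contradicting $\W_6 \cap \W \neq \emptyset$.

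The main obstacle is ensuring that each projection $\pi_\W(v_j)$ lies in the expected cell so that \cref{lem:angles_triangle_equality,lem:bounding_2cells} apply in the intended way, and performing the case splits for the different truncation sequences precisely. The hardest subcase is when every $Q^{(j)}$ is a square (attaining the minimum per-step contribution of $\tfrac{\pi}{2}$), which requires the most steps; even then, the $\tfrac{2\pi}{3}$ baseline inherited from the hexagon $P_p$ together with at most three right-angle increments pushes the angle past $\pi$ by $v_6$, closing the contradiction.
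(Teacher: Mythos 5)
Your high-level strategy is sound: argue by contradiction, use the disjointness criterion to translate ``$\W_j$ intersects $\W$'' into angle inequalities, and iterate along $\gamma$ using \cref{lem:bounding_2cells} and \cref{lem:angles_triangle_equality}. The baseline angle computation at $v_2$ is essentially what the paper does (the bound $\angle_{v_2}(\pi_\W(v_2),v_1)<\frac{\pi}{3}$ comes from \cref{lem:bounding_2cells} applied at $v_2$ once $P_p$ is known to be a hexagon, not quite ``from the proof of \cref{lem:second_cell},'' but the inequality is correct). However, the core of your argument---that a single ``running angle'' accumulates by at least $\frac{\pi}{2}$ per step and must saturate to $\pi$ by $v_6$---does not hold up, and this is where the real work lies.

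The problem is that the quantity you are tracking resets. Your recursion
\[
\angle_{v_{j+1}}(\pi_\W(v_{j+1}),v_{j+2})=\tfrac{p_j-1}{p_j}\pi\lplus\angle_{v_{j+1}}(\pi_\W(v_{j+1}),v_j)
\]
is fine as far as it goes, but it involves the \emph{residual} angle $\angle_{v_{j+1}}(\pi_\W(v_{j+1}),v_j)$, which you treat as if it were inherited from (or at least commensurate with) the previous step's total. It is not: the projection point $\pi_\W(v_{j+1})$ moves, and the residual can collapse to a small value even when the previous step's angle was close to $\pi$. Concretely, in the actual configuration the paper analyzes, one finds $\pi_\W(v_3)=\pi_\W(v_1)$, and $\angle_{v_3}(v_2,\pi_\W(v_3))=\frac{\pi}{6}$, so that with $P_\ell$ a square the next angle is only $\frac{\pi}{6}\lplus\frac{\pi}{2}=\frac{2\pi}{3}$---no larger than the baseline. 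Without pinning down where each $\pi_\W(v_j)$ lies (which vertex or center it is, and through which cells the geodesic $[v_j,\pi_\W(v_j)]$ travels), there is no lower bound on the residuals and the accumulation argument stalls.

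There is a second structural issue. The paper does not apply the disjointness criterion only at the vertices $v_j$; several of the decisive computations take place at \emph{other} points---at $u_2$ (a vertex of $P_q$ not on $\gamma$), at a midpoint $m$ of an edge of $P_q$, and after replacing a vertex of $\gamma$ with a different vertex of the relevant cell. These ``observation point'' choices, together with case splits on whether certain cells are squares or hexagons and on whether $v_6$ lies in $P_r$, are what actually close the argument. A case analysis on the truncation sequence $(p_3,\dots,p_6)$ alone, as you propose, misses the subcases the paper rules out by moving the observation point or by invoking the link condition to forbid adjacent small cells (e.g., two squares sharing an edge with a hexagon). So while your proposal is pointed in the right direction and uses the correct tools, the ``running angle saturates by $v_6$'' step is a genuine gap rather than a detail to fill in: it is precisely the content of the proposition, and proving it requires the configuration-by-configuration analysis.
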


\begin{proof}
Recall that $q>2$, by minimality of the length of $\gamma$. Thus by \cref{lem:angles_triangle_equality}, we have $\angle_{v_1}(\pi_\W(v_1), \pi_{\W_2}(v_1))=\angle_{v_1}(\pi_\W(v_1), u_1)\lplus \frac{\pi}{3}$ (\cref{fig:case_m3_W2_bounding_square}).
Because $\W_2$ intersects $\W$,  we therefore have $\angle_{v_1}(\pi_\W(v_1), u_1)<\frac{2\pi}{3}$, and so 
$\pi_\W(v_1), v_1,u_1$ are contained in a square. This implies by the link condition that $q=6$. Furthermore, by minimality of the length of $\gamma$, the vertex $v_4$ does not belong to $P_p$.

\begin{figure}
    \centering
    \begin{subfigure}{0.3\textwidth}
    \centering
    \begin{tikzpicture}
    \def\n{6}
    \def\len{1.05}
        
    \def\r{0.5*\len cm / sin{(180/(2*\n))}}
    \def\R{0.5*\len cm / tan{(180/(2*\n))}}

    \foreach \k in {-1,0,...,2} {
        \coordinate (p\k) at ({180/(2*\n)+(\k-1)*180/(\n)}:\r);
        \coordinate (q) at ({180/(2*\n)+\k*180/(\n)}:\r);
        
        \draw (p\k) -- (q);
    }

    \filldraw (p1) circle (1.5pt) node [below right] {$v_1$};
    \filldraw (p2) circle (1.5pt) node [below left] {$v_2$};
    \filldraw (q) circle (1pt) node [left] {$u_2$};

    \draw  (0, 0) -- ({1.9*\R}, 0) node[below] {$\mathcal{W}$};

    \coordinate (u') at ($(p1)!1!90:(p0)$);
    \draw (p1) -- (u');
    \filldraw (u') circle (1pt) node [below] {$u_1$};
    
    \coordinate (v3) at ($(p2)!1!120:(p1)$);
    \draw [ultra thick] (\R,0) -- (p1) -- (p2) -- (v3);
    \filldraw (v3) circle (1.5pt) node [left] {$v_3$};

    \coordinate (tmp1) at (p2);
    \coordinate (tmp2) at (v3);
    \foreach \k in {0,1,...,2} {
        \coordinate (tmp3) at ($(tmp2)!1!120:(tmp1)$);
        \draw (tmp2) -- (tmp3);

        \coordinate (tmp1) at (tmp2);
        \coordinate (tmp2) at (tmp3);
    }
            
    \coordinate (m2) at ($(p2)!0.5!120:(p1)$);
    \coordinate (W2a) at ($(m2)!1.5!90:(v3)$);
    \coordinate (W2b) at ($(m2)!4.5!-90:(v3)$);
    \draw (W2a) -- (W2b) node [above] {$\W_2$};
    
    \coordinate (pv1) at ($(W2a)!(p1)!(W2b)$);
    \draw [dashed] (p1)--(pv1)  node [above, gray] {$P_p$};

    \path (p2)--(p1)--(pv1)
        pic ["\small{$\frac{\pi}{3}$}", draw, color=darkgray, angle radius=0.25*\len cm, angle eccentricity=1.7] {angle=pv1--p1--p2};

    \path (pv1)--(p1)--(u')
        pic ["\small{$\frac{\pi}{3}$}", draw, color=darkgray, angle radius=0.25*\len cm, angle eccentricity=1.7] {angle=u'--p1--pv1};

    \coordinate (C) at (0,0) node [below right, gray] {$P_q$};
\end{tikzpicture}
\caption{$\pi_\W(v_1), v_1, u_1$~bound a square}
\label{fig:case_m3_W2_bounding_square}

\end{subfigure}
\begin{subfigure}{0.3\textwidth}
\centering
\begin{tikzpicture}
    \def\n{6}
    \def\len{1.05}
        
    \def\r{0.5*\len cm / sin{(180/(2*\n))}}
    \def\R{0.5*\len cm / tan{(180/(2*\n))}}

    \foreach \k in {-1,0,...,2} {
        \coordinate (p\k) at ({180/(2*\n)+(\k-1)*180/(\n)}:\r);
        \coordinate (q) at ({180/(2*\n)+\k*180/(\n)}:\r);
        
        \draw (p\k) -- (q);
    }

    \filldraw (p1) circle (1.5pt) node [below right] {$v_1$};
    \filldraw (p2) circle (1.5pt) node [right] {$v_2$};
    \coordinate (p3) at (q);
    \filldraw (p3) circle (1pt) node [below] {$u_2$};

    \draw  (0, 0) -- ({1.9*\R}, 0) node[below] {$\mathcal{W}$};

    \coordinate (u') at ($(p1)!1!90:(p0)$);
    \draw (p1) -- (u');
    
    \coordinate (v3) at ($(p2)!1!120:(p1)$);
    \draw [ultra thick] (\R,0) -- (p1) -- (p2) -- (v3);
    \filldraw (v3) circle (1.5pt) node [above right] {$v_3$};

    \coordinate (tmp1) at (p2);
    \coordinate (tmp2) at (v3);
    \foreach \k in {0,1,...,2} {
        \coordinate (tmp3) at ($(tmp2)!1!120:(tmp1)$);
        \draw (tmp2) -- (tmp3);

        \coordinate (tmp1) at (tmp2);
        \coordinate (tmp2) at (tmp3);
    }
    \node [gray] (hex_center) at ($(tmp2)!1!60:(tmp1)$) {$P_p$};

    \draw (\R, -0.5*\len cm) -- ({\R+1*\len cm}, -0.5*\len cm) -- (u');

    \coordinate (v4) at ($(v3)!1!-130:(p2)$);
    \draw [ultra thick] (v3)--(v4);
    \filldraw (v4) circle (1.5pt) node [left] {$v_4$};
    
    \coordinate (m3) at ($(v3)!0.5!-130:(p2)$);
    \coordinate (W3a) at ($(m3)!1.8!95:(v4)$);
    \coordinate (W3b) at ($(m3)!2!-85:(v4)$);
    \draw (W3a) -- (W3b) node [below right] {$\W_3$};

    \coordinate (u2') at ($(p2)!1!-15:(p3)$);
    \draw (p2) -- (u2');
    \filldraw (u2') circle (1pt) node [above left] {$u_2'$};
    
    \coordinate (pv2) at ($(-1,0)!(p2)!(1,0)$);
    \draw [dashed] (p2)--(pv2);

    \coordinate (pv3) at ($(-1,0)!(v3)!(1,0)$);
    \draw [dashed] (v3)--(pv3);

    \coordinate (pv2_W3) at ($(W3a)!(p2)!(W3b)$);
    \draw [dashed] (p2)--(pv2_W3)  node [above, gray] {$P_\ell$};

    \path (pv2)--(p2)--(p1)
        pic ["\small{$\frac{\pi}{6}$}", draw, color=darkgray, angle radius=0.35*\len cm, angle eccentricity=1.7] {angle=pv2--p2--p1};

    \path (p3)--(p2)--(pv2)
        pic ["\small{$\frac{2\pi}{3}$}", draw, color=darkgray, angle radius=0.25*\len cm, angle eccentricity=1.7] {angle=p3--p2--pv2};

    \path (v3)--(p2)--(pv2_W3)
        pic ["{$\frac{\pi}{\ell}$}", draw, color=darkgray, angle radius=0.3*\len cm, angle eccentricity=1.7] {angle=v3--p2--pv2_W3};

    \path (p2)--(v3)--(pv3)
        pic ["{$\frac{\pi}{6}$}", draw, color=darkgray, angle radius=0.3*\len cm, angle eccentricity=1.75] {angle=p2--v3--pv3};

    \path (pv3) -- (v3) -- ($(v3)!1!120:(p2)$) coordinate (tmp)
        pic [draw, angle radius=0.15*\len cm, angle eccentricity=1.7] {right angle=pv3--v3--tmp};

    \coordinate (C) at (0,0) node [below right, gray] {$P_q$};
\end{tikzpicture}
\caption{$P_\ell$ must be a square}
\label{fig:case_m=3_a}
\end{subfigure}
\begin{subfigure}{0.3\textwidth}
\centering
\begin{tikzpicture}
    \def\len{1.05}
    \def\n{6}
    
    \def\r{0.5*\len cm / sin{(180/(2*\n))}}
    \def\R{0.5*\len cm / tan{(180/(2*\n))}}

    \coordinate (v1) at ({180/(2*\n)}:\r);
    \coordinate (v2) at ({3*180/(2*\n)}:\r);
    \coordinate (v3) at ($(v2)!1!120:(v1)$);
    \filldraw (v1) circle (1pt);
    \filldraw (v2) circle (1pt);
    \filldraw (v3) circle (1pt) node [above right] {$v_3$};

    \foreach \k in {-1,0,...,3}
        \draw ({180/(2*\n)+(\k-1)*180/(\n)}:\r) -- ({180/(2*\n)+\k*180/(\n)}:\r);
        
    \draw (-\r-0.3*\r+\R, 0) -- ({1.9*\R}, 0)  node[below] {$\mathcal{W}$};

    \coordinate (h4) at ($(v3)!1!120:(v2)$);
    \coordinate (h5) at ($(h4)!1!120:(v3)$);
    \coordinate (h6) at ($(h5)!1!120:(h4)$);
    
    \draw (v2) --(v3) -- (h4) -- (h5) -- (h6) -- (v1);
    
    \coordinate (v4) at ($(v3)!1!-90:(v2)$);
    \draw (v3) -- (v4);
    \filldraw (v4) circle (1pt) node [left] {$v_4$};

    \coordinate (p4) at ($(-1, 0)!(v4)!(0, 0)$);
    \draw[dashed] (v4) -- (p4);
    \path (p4) -- (v4) -- (v3)
        pic ["$\frac{\pi}{3}$", draw, color=darkgray, angle radius=0.33*\len cm, angle eccentricity=2] {angle=p4--v4--v3};
    
    \coordinate (c3) at ($(v2)!1!-5*180/6:(v1)$);
    \filldraw (c3) circle (1pt) node [above left] {$u_2$};
    \draw (v4) -- (c3);

    \draw (\R, -0.5*\len cm) -- ({\R+1*\len cm}, -0.5*\len cm) -- (h6);
    
    \draw[very thick] (\R,0) -- (v1) -- (v2) -- (v3) -- (v4);

\coordinate (C) at (0,0) node [below right, gray] {$P_q$};
\end{tikzpicture}
\caption{}
\label{fig:case_m=3_b}
\end{subfigure}
    \unskip
    \caption{Hexagon case}
\end{figure}
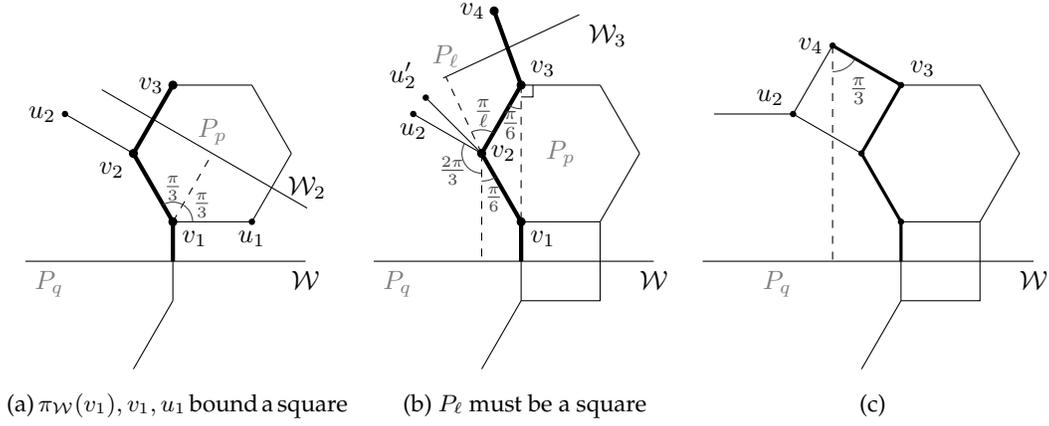

Now $\pi_{\W}(v_3)=\pi_\W(v_1)$, and $\angle_{v_2}(v_1, \pi_\W(v_2)) = \angle_{v_3}(v_2, \pi_\W(v_3))=\frac{\pi}{6}$ (\cref{fig:case_m=3_a}). In particular, applying \cref{lem:bounding_2cells} to the cell $P_p$ shows that $\W_3$ can only intersect $\W$ if $v_2, v_3$ and $v_4$ bound a $2\ell$-truncated polygon $P_\ell$, where $\ell<6$. Let $u_2'\neq v_3$ be the other vertex in $P_\ell$ adjacent to $v_2$. By the link condition, $\angle_{v_2}(v_1, u_2')\geq \frac{\pi}{3}\lplus\frac{\pi}{\ell}$ and $\angle_{v_2}(u_2, v_3)\geq \frac{\pi}{6}+\frac{\pi}{3}$. Thus, applying \cref{lem:angles_triangle_equality} first to the cell $P_q$, then to the cell $P_\ell$ gives
\[
    \angle_{v_2}(\pi_\W(v_2), \pi_{\W_3}(v_2))= \min
    \left\{\begin{array}{l}
        \frac{\pi}{6}\lplus \frac{2\pi}{3}\lplus \frac{\pi}{\ell},\\
        \frac{\pi}{6}\lplus\angle_{v_2}(v_1, u_2')\lplus \frac{\ell-2}{\ell}\pi,\\
        \frac{2\pi}{3}\lplus \angle_{v_2}(u_2, v_3)\lplus \frac{\pi}{\ell},\\
        \frac{2\pi}{3}\lplus \angle_{v_2}(u_2, u_2')\lplus \frac{\ell-2}{\ell}\pi
    \end{array}\right\} = \tfrac{2\pi}{3}\lplus \angle_{v_2}(u_2, u_2')\lplus \tfrac{\ell-2}{\ell}\pi
\]
Therefore, if $\W_3$ intersects $\W$, then by \cref{lem:parallel_criterion}, we have $u_2=u_2'$ and $\ell=2$.
In other words, the vertices $u,v_2,v_3, v_4$ bound a square. Note that $\angle_{v_4}(v_3, \pi_\W(v_4))=\frac{\pi}{3}$ (\cref{fig:case_m=3_b}).
 
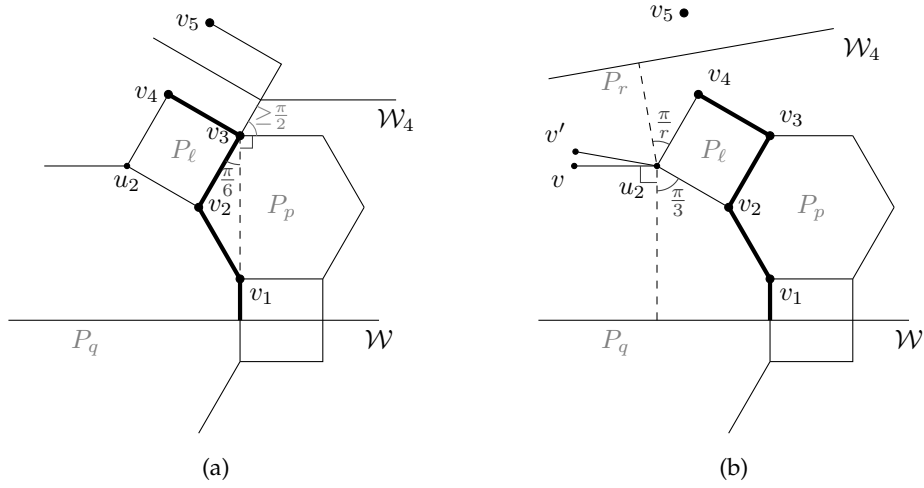
\begin{figure}
    \centering 
    \def\len{1.1}
\begin{subfigure}{0.45\textwidth}
    \centering
    \begin{tikzpicture}
    \def\n{6}
        
    \def\r{0.5*\len cm / sin{(180/(2*\n))}}
    \def\R{0.5*\len cm / tan{(180/(2*\n))}}

    \foreach \k in {-1,0,...,3} {
        \coordinate (p\k) at ({180/(2*\n)+(\k-1)*180/(\n)}:\r);
        \coordinate (q) at ({180/(2*\n)+\k*180/(\n)}:\r);
        
        \draw (p\k) -- (q);
    }

    \filldraw (p1) circle (1.5pt) node [below right] {$v_1$};
    \filldraw (p2) circle (1.5pt) node [right] {$v_2$};
    \filldraw (p3) circle (1pt) node [below] {$u_2$};

    \draw  ({-0.5*\R}, 0) -- ({1.9*\R}, 0) node[below] {$\mathcal{W}$};

    \coordinate (u') at ($(p1)!1!90:(p0)$);
    \draw (p1) -- (u');
    
    \coordinate (v3) at ($(p2)!1!120:(p1)$);
    \draw [ultra thick] (\R,0) -- (p1) -- (p2) -- (v3);
    \filldraw (v3) circle (1.5pt) node [ left] {$v_3$};

    \coordinate (tmp1) at (p2);
    \coordinate (tmp2) at (v3);
    \foreach \k in {0,1,...,2} {
        \coordinate (tmp3) at ($(tmp2)!1!120:(tmp1)$);
        \draw (tmp2) -- (tmp3);

        \coordinate (tmp1) at (tmp2);
        \coordinate (tmp2) at (tmp3);
    }
    \node [gray] (hex_center) at ($(tmp2)!1!60:(tmp1)$) {$P_p$};

    \draw (\R, -0.5*\len cm) -- ({\R+1*\len cm}, -0.5*\len cm) -- (u');

    \coordinate (v4) at ($(v3)!1!-90:(p2)$);
    \draw [ultra thick] (v3)--(v4);
    \filldraw (v4) circle (1.5pt) node [left] {$v_4$};

    \draw (v4)--(p3);
    \node [gray] (hex_center) at ($(v3)!0.7!45:(v4)$) {$P_\ell$};

    \coordinate (v5) at ($(v4)!1!90:(v3)$);
    \filldraw (v5) circle (1.5pt) node [left] {$v_5$};

    \draw (v5) -- ($(v5)!1!90:(v4)$) -- (v3);
    
    \coordinate (m4) at ($(v4)!0.5!0:(v5)$);
    \coordinate (tmp) at ($(m4)!1!-90:(v4)$);
    \coordinate (m4') at ($(v3)!0.5!-90:(v4)$);
    \draw (tmp) -- (m4) -- (m4') -- ($(m4')+(1.8,0)$) node [below] {$\W_4$};

    \coordinate (pv3) at ($(-1,0)!(v3)!(1,0)$);
    \draw [dashed] (v3)--(pv3);

    \path (p2)--(v3)--(pv3)
        pic ["{$\frac{\pi}{6}$}", draw, color=darkgray, angle radius=0.35*\len cm, angle eccentricity=1.65] {angle=p2--v3--pv3};

    \path (pv3) -- (v3) -- ($(v3)!1!120:(p2)$) coordinate (tmp)
        pic [draw, color=darkgray, angle radius=0.15*\len cm, angle eccentricity=1.7] {right angle=pv3--v3--tmp};

    \path (m4') -- (v3) -- ($(v3)+(0.5,0)$) coordinate (tmp)
        pic ["\small{$\geq\!\!\frac{\pi}{2}$}",draw, color=gray, angle radius=0.2*\len cm, angle eccentricity=2.25] {angle=tmp--v3--m4'};
    
    \coordinate (C) at (0,0) node [below, gray] {$P_q$};
\end{tikzpicture}
    \caption{}
    \label{fig:case_m3_W4_square}
\end{subfigure}
\begin{subfigure}{0.45\textwidth}
\centering
\begin{tikzpicture}
    \def\n{6}
    \def\r{0.5*\len cm / sin{(180/(2*\n))}}
    \def\R{0.5*\len cm / tan{(180/(2*\n))}}

    \foreach \k in {-1,0,...,3} {
        \coordinate (p\k) at ({180/(2*\n)+(\k-1)*180/(\n)}:\r);
        \coordinate (q) at ({180/(2*\n)+\k*180/(\n)}:\r);
        
        \draw (p\k) -- (q);
    }

    \filldraw (p1) circle (1.5pt) node [below right] {$v_1$};
    \filldraw (p2) circle (1.5pt) node [right] {$v_2$};
    \filldraw (p3) circle (1pt);

    \coordinate (v) at (q);
    \filldraw (v) circle (1pt) node [below left] {$v$};
    
    \draw  ({-0.5*\R}, 0) -- ({1.9*\R}, 0) node[below] {$\mathcal{W}$};

    \coordinate (u') at ($(p1)!1!90:(p0)$);
    \draw (p1) -- (u');
    
    \coordinate (v3) at ($(p2)!1!120:(p1)$);
    \draw [ultra thick] (\R,0) -- (p1) -- (p2) -- (v3);
    \filldraw (v3) circle (1.5pt) node [above right] {$v_3$};

    \coordinate (tmp1) at (p2);
    \coordinate (tmp2) at (v3);
    \foreach \k in {0,1,...,2} {
        \coordinate (tmp3) at ($(tmp2)!1!120:(tmp1)$);
        \draw (tmp2) -- (tmp3);

        \coordinate (tmp1) at (tmp2);
        \coordinate (tmp2) at (tmp3);
    }
    \node [gray] (hex_center) at ($(tmp2)!1!60:(tmp1)$) {$P_p$};

    \draw (\R, -0.5*\len cm) -- ({\R+1*\len cm}, -0.5*\len cm) -- (u');

    \coordinate (v4) at ($(v3)!1!-90:(p2)$);
    \draw [ultra thick] (v3)--(v4);
    \filldraw (v4) circle (1.5pt) node [above right] {$v_4$};

    \draw (v4)--(p3);
    \node [gray] (hex_center) at ($(v3)!0.7!45:(v4)$) {$P_\ell$};

    \coordinate (v5) at ($(v4)!1!130:(v3)$);
    \filldraw (v5) circle (1.5pt) node [left] {$v_5$};
    
    \coordinate (m4) at ($(v4)!0.5!0:(v5)$);
    \coordinate (W4a) at ($(m4)!3.5!-90:(v4)$);
    \coordinate (W4b) at ($(m4)!3.5!90:(v4)$);
    \draw (W4a) -- (W4b) node [below right] {$\W_4$};

    \coordinate (v') at ($(p3)!1!-10:(v)$);
    \draw (p3) -- (v');
    \filldraw (v') circle (1pt) node[above left] {$v'$};
    
    \coordinate (pu2) at ($(-1,0)!(p3)!(1,0)$);
    \coordinate (pu2_W4) at ($(W4a)!(p3)!(W4b)$);
    
    \draw[dashed] (pu2)--(p3)--(pu2_W4) node [gray, below left] {$P_r$};

    \path (pu2)--(p3)--(p2)
        pic ["{$\frac{\pi}{3}$}", draw, color=darkgray, angle radius=0.3*\len cm, angle eccentricity=1.7] {angle=pu2--p3--p2};

    \path (pu2_W4)--(p3)--(v4)
        pic ["{$\frac{\pi}{r}$}", draw, color=darkgray, angle radius=0.3*\len cm, angle eccentricity=1.7] {angle=v4--p3--pu2_W4};

    \path (v) -- (p3) -- (pu2)
        pic ["{\color{black}$u_2$}", draw, color=darkgray, angle radius=0.2*\len cm, angle eccentricity=1.5] {right angle=v--p3--pu2};
    \coordinate (C) at (0,0) node [below, gray] {$P_q$};
\end{tikzpicture}
    \caption{}
    \label{fig:case_m3_W4_square_octagon}
\end{subfigure}
    \unskip
    \caption{Hexagon case (continued)}
    \label{fig:case_m3_W4}
\end{figure}
 
We now assume that $\W_4$ intersects $\W$. This means that either $v_3, v_4,v_5$ bound a square or $u, v_4, v_5$ bound a $2r$-truncated polygon $P_r$ with $r<6$. In the former case, we have that $\angle_{v_3}(\pi_\W(v_3), \pi_{\W_4}(v_3))=\pi$ (\cref{fig:case_m3_W4_square}). In the latter case, let $v\neq v_2$ be the second vertex in $P_q$ adjacent to $u_2$, and let $v'\neq v_4$ be the second vertex in $P_r$ adjacent to $u_2$ (\cref{fig:case_m3_W4_square_octagon}). Then $\angle_{u_2}(v, v_4)\geq \frac{\pi}{2}$ because $v\neq v_4$, and since $r<6$, we have by the link condition that $\angle_{u_2}(v_2, v')\geq \frac{3\pi}{4}$. Thus applying \cref{lem:angles_triangle_equality}
once to the cell $P_q$ and again to the cell $P_r$, we obtain
\[
    \angle_{u_2}(\pi_\W(u), \pi_{\W_4}(u))=\min
    \left\{\begin{array}{l}
        \tfrac{\pi}{2} \lplus\angle_{u_2}(v,v')\lplus \frac{r-2}{r}\pi,  \\
        \tfrac{\pi}{2} \lplus\angle_{u_2}(v, v_4)\lplus \frac{\pi}{r}, \\
        \tfrac{\pi}{3}\lplus \tfrac{\pi}{2}\lplus \tfrac{\pi}{r},\\
        \tfrac{\pi}{3} \lplus\angle_{u_2}(v_2,v')\lplus \frac{r-2}{r}\pi  \\
    \end{array}\right\} = \tfrac{\pi}{2} \lplus\angle_{u_2}(v,v')\lplus \tfrac{r-2}{r}\pi
\]
In particular, if $\W_4$ intersects $\W$, then $v=v'$ and $r<4$. But the link condition implies that $P_r$ cannot be a square sharing an edge with $P_q$, so $r=3$. We claim then that $\W_5$ is disjoint from $\W$. Here, there are two cases to consider:

\begin{figure}
    \centering 
    \def\len{1.3}
\begin{subfigure}{0.425\textwidth}
    \centering
    \begin{tikzpicture}
    \def\n{6}
     
    \def\r{0.5*\len cm / sin{(180/(2*\n))}}
    \def\R{0.5*\len cm / tan{(180/(2*\n))}}

    \foreach \k in {0,1,...,3} {
        \coordinate (p\k) at ({180/(2*\n)+(\k-1)*180/(\n)}:\r);
        \coordinate (q) at ({180/(2*\n)+\k*180/(\n)}:\r);
        
        \draw (p\k) -- (q);
    }

    \filldraw (p1) circle (1.5pt) node [below right] {$v_1$};
    \filldraw (p2) circle (1.5pt) node [right] {$v_2$};
    \filldraw (p3) circle (1pt) node [below left] {$u_2$};

    \coordinate (v) at (q);
    \filldraw (v) circle (1pt) node [below left] {$v$};
    
    \draw  ({-0.5*\R}, 0) -- ({1.9*\R}, 0) node[below] {$\mathcal{W}$};
    
    \coordinate (v3) at ($(p2)!1!120:(p1)$);
    \draw [ultra thick] (\R,0) -- (p1) -- (p2) -- (v3);
    \filldraw (v3) circle (1.5pt) node [below right] {$v_3$};

    \coordinate (tmp1) at (p2);
    \coordinate (tmp2) at (v3);
    \foreach \k in {0,1,...,3} {
        \coordinate (tmp3) at ($(tmp2)!1!120:(tmp1)$);
        \draw (tmp2) -- (tmp3);

        \coordinate (tmp1) at (tmp2);
        \coordinate (tmp2) at (tmp3);
    }
    \node [gray] (hex_center) at ($(tmp2)!1!60:(tmp1)$) {$P_p$};

    \coordinate (v4) at ($(v3)!1!-90:(p2)$);
    \draw [ultra thick] (v3)--(v4);
    \filldraw (v4) circle (1.5pt) node [label={[label distance=2pt]215:$v_4$}] {};

    \draw (v4)--(p3);
    \node [gray] (hex_center) at ($(v3)!0.7!45:(v4)$) {$P_\ell$};

    \coordinate (v5) at ($(v4)!1!-120:(p3)$);
    \draw [ultra thick] (v5)--(v4);
    \filldraw (v5) circle (1.5pt) node [below left] {$v_5$};

    \coordinate (h5) at ($(v5)!1!-120:(v4)$);
    \coordinate (h4) at ($(h5)!1!-120:(v5)$);
    
    \draw (v5) -- (h5) -- (h4) -- (v);
    \node [gray] (Pr_center) at ($(v5)!1!-60:(v4)$) {$P_r$};
    
    \coordinate (v6) at ($(v5)!1!125:(v4)$);
    \draw [ultra thick] (v6)--(v5);
    \filldraw (v6) circle (1.5pt) node [left] {$v_6$};

    \coordinate (u4) at ($(v4)!1!-125:(v5)$);
    \draw (v4)--(u4);
    \filldraw (u4) circle (1pt) node [right] {$u_4$};
    
    \coordinate (m5) at ($(v6)!0.5!(v5)$);
    \filldraw (m5) circle (1.5pt) node [label={[label distance=1pt]3:\small{$\pi_{\W_5}(v_5)$}}] {};
    \coordinate (W5a) at ($(m5)!2!-90:(v5)$);
    \coordinate (W5b) at ($(m5)!6!90:(v5)$);
    \draw (W5a) -- (W5b) node [above] {$\W_5$};
    
    \coordinate (pv4_W5) at ($(W5a)!(v4)!(W5b)$);
    \coordinate (pv4) at ($(-1,0)!(v4)!(1,0)$);
    \draw[dashed] (pv4) -- (v4) -- (pv4_W5) node [gray, right=8pt] {$P_s$};
    
    \coordinate (pv5) at ($(-1,0)!(v5)!(1,0)$);
    \draw[dashed] (v5)--(pv5);

    \path (pv4)--(v4)--(v3)
        pic ["{$\frac{\pi}{3}$}", draw, color=darkgray, angle radius=0.25*\len cm, angle eccentricity=1.7] {angle=pv4--v4--v3};

\path (pv4)--(v4)--(p3)
    pic ["{$\frac{\pi}{6}$}", draw, color=darkgray, angle radius=0.3*\len cm, angle eccentricity=1.7] {angle=p3--v4--pv4};

\path (p3)--(v4)--(v5)
    pic ["{$\frac{2\pi}{3}$}", draw, color=darkgray, angle radius=0.2*\len cm, angle eccentricity=1.65] {angle=v5--v4--p3};

\path (v5)--(v4)--(pv4_W5)
    pic ["{$\frac{\pi}{s}$}", draw, color=darkgray, angle radius=0.3*\len cm, angle eccentricity=1.6] {angle=pv4_W5--v4--v5};

\path (u4)--(v4)--(pv4_W5)
    pic ["{$\frac{s-2}{s}\pi$}", draw, color=darkgray, angle radius=0.25*\len cm, angle eccentricity=2.2] {angle=u4--v4--pv4_W5};

\path (u4)--(v4)--(v3)
    pic ["\small{$\geq\!\!\frac{\pi}{2}$}", draw, color=gray, angle radius=0.47*\len cm, angle eccentricity=1.6] {angle=v3--v4--u4};

\path (v4)--(v5)--(v6)
    pic ["\small{$<\!\!\frac{5\pi}{6}$}", draw, color=darkgray, angle radius=0.2*\len cm, angle eccentricity=2.2] {angle=v4--v5--v6};
        
    \coordinate (C) at (0,0) node [below, gray] {$P_q$};
\end{tikzpicture}
    \caption{}
    \label{fig:case_m3_W4_square_hexagon_no_v6}
\end{subfigure}
\begin{subfigure}{0.525\textwidth}
    \centering
    \begin{tikzpicture}
    \def\n{6}
        
    \def\r{0.5*\len cm / sin{(180/(2*\n))}}
    \def\R{0.5*\len cm / tan{(180/(2*\n))}}

    \foreach \k in {0,1,...,4} {
        \coordinate (p\k) at ({180/(2*\n)+(\k-1)*180/(\n)}:\r);
        \coordinate (q) at ({180/(2*\n)+\k*180/(\n)}:\r);
        
        \draw (p\k) -- (q);
    }

    \filldraw (p1) circle (1.5pt) node [below right] {$v_1$};
    \filldraw (p2) circle (1.5pt) node [right] {$v_2$};
    \filldraw (p3) circle (1pt) node [below] {$u_2$};

    \coordinate (v) at (p4);
    \coordinate (u) at (q);
    \filldraw (v) circle (1pt) node [above left=2pt and 5pt] {$v$};
    \filldraw (u) circle (1pt) node [below left] {$u$};
    
    \draw  ({-0.9*\R}, 0) -- ({1.9*\R}, 0) node[below] {$\mathcal{W}$};
    
    \coordinate (v3) at ($(p2)!1!120:(p1)$);
    \draw [ultra thick] (\R,0) -- (p1) -- (p2) -- (v3);
    \filldraw (v3) circle (1.5pt) node [below right] {$v_3$};

    \coordinate (tmp1) at (p2);
    \coordinate (tmp2) at (v3);
    \foreach \k in {0,1,...,3} {
        \coordinate (tmp3) at ($(tmp2)!1!120:(tmp1)$);
        \draw (tmp2) -- (tmp3);

        \coordinate (tmp1) at (tmp2);
        \coordinate (tmp2) at (tmp3);
    }
    \node [gray] (hex_center) at ($(tmp2)!1!60:(tmp1)$) {$P_p$};

    \coordinate (v4) at ($(v3)!1!-90:(p2)$);
    \draw [ultra thick] (v3)--(v4);
    \filldraw (v4) circle (1.5pt) node [above right] {$v_4$};

    \draw (v4)--(p3);
    \node [gray] (hex_center) at ($(v3)!0.7!45:(v4)$) {$P_\ell$};

    \coordinate (v5) at ($(v4)!1!-120:(p3)$);
    \draw [ultra thick] (v5)--(v4);
    \filldraw (v5) circle (1.5pt) node [above right] {$v_5$};

    \coordinate (m4) at ($(v5)!0.5!(v4)$);
    \coordinate (W4a) at ($(m4)!6!-90:(v4)$);
    \coordinate (W4b) at ($(m4)!2!90:(v4)$);
    \draw (W4a) -- (W4b) node [below=5pt] {$\W_4$};

    \coordinate (v6) at ($(v5)!1!-120:(v4)$);
    \coordinate (w) at ($(v6)!1!-120:(v5)$);
    \filldraw (w) circle (1pt) node[above left] {$w$};
    
    \draw (v5) -- (v6) -- (w) -- (v);
    \node [gray] (Pr_center) at ($(v6)!0.8!-60:(v5)$) {$P_r$};
    
    \draw [ultra thick] (v6)--(v5);
    \filldraw (v6) circle (1.5pt) node [above left] {$v_6$};

    \draw (w) -- ($(w)!1!-90:(v)$) -- (u);

    \coordinate (m) at ($(v)!0.5!(u)$);
    \filldraw (m) circle (1pt) node [left] {$m$};

    \coordinate (C) at (0,0);
    \filldraw (C) circle (1pt);
    \node [below, gray] (Pq_center) at ({0.5*\R}, 0) {$P_q$};
    
    \draw[dashed] ($(W4a)!(m)!(W4b)$) -- (m) -- (C);
    \draw [dashed] (v) -- (C);
    
    \path (v) -- (C) -- (m)
        pic ["{$\frac{\pi}{12}$}", draw, color=darkgray, angle radius=0.7*\len cm, angle eccentricity=1.4] 
        {angle=v--C--m};

    \path (v) -- (C) -- (\R, 0) coordinate (tmp)
        pic ["{$\frac{7\pi}{12}$}", draw, color=darkgray, angle radius=0.3*\len cm, angle eccentricity=1.7] 
        {angle=tmp--C--v};

    \path (-\R, 0) coordinate (tmp1) -- (C) -- (\R, 0) coordinate (tmp2)
        pic ["{$\frac{n}{6}\pi$}", draw, color=gray, angle radius=0.2*\len cm, angle eccentricity=1.7] 
        {angle=tmp1--C--tmp2};

    \path (-\R, 0) coordinate (tmp1) -- (C) -- (m)
        pic ["{$\frac{n-4}{6}\pi$}", draw, color=gray, angle radius=0.4*\len cm, angle eccentricity=1.7] 
        {angle=m--C--tmp1};

    \path (w) -- (v) -- (p3)
        pic ["{$\frac{2\pi}{3}$}", draw, color=darkgray, angle radius=0.15*\len cm, angle eccentricity=2] 
            {angle=p3--v--w};
    
    \tikzset{angleLine/.style={decoration={markings,
mark= at position 0.5 with
      with {
        \draw (0pt,-2pt) -- (0pt,2pt);
    } },
      pic actions/.append code=\tikzset{postaction=decorate}}}
      
    \path (C) -- (v) -- (p3)
        pic [angleLine, "{$\frac{5\pi}{12}$}", draw, color=darkgray, angle radius=0.25*\len cm, angle eccentricity=2]
            {angle=C--v--p3};

    \path (C) -- (v) -- (u)
        pic [angleLine, draw, color=darkgray, angle radius=0.25*\len cm]
            {angle=u--v--C};

    \path (w) -- (v) -- (u)
        pic [draw, color=gray, angle radius=0.15*\len cm]
            {right angle=u--v--w};

    \path (v) -- (m) -- ($(W4a)!(m)!(W4b)$) coordinate (tmp)
        pic [draw, color=gray, angle radius=0.125*\len cm]
            {right angle=v--m--tmp};

    \path (v) -- (m) -- (C)
        pic [draw, color=gray, angle radius=0.125*\len cm]
            {right angle=v--m--C};
\end{tikzpicture}
        \caption{$P_q$ has $2n$ sides, where $n>6$, so $\frac{n-4}{6}\pi\geq\frac{\pi}{2}$.}
        \label{fig:case_m3_W4_square_hexagon_v6}
\end{subfigure}
    
    \unskip
    \caption{Hexagon case (continued)}
    \label{fig:temp-fig-b+c_2}
\end{figure}

If $v_6$ is not a vertex of $P_r$, then $\angle_{v_5}(v_6, \pi_\W(v_5))=\frac{\pi}{6}\lplus \angle_{v_5}(v_4,v_6)$, and so $\W_5$ is disjoint from $\W$ unless $v_4, v_5, v_6$ bound a $2s$-truncated polygon $P_s$ with $s<6$ (\cref{fig:case_m3_W4_square_hexagon_no_v6}). Let $u_4\neq v_5$ be the other vertex of $P_s$ adjacent to $v_4$. By the link condition, we have $\angle_{v_4}(v_3, v_5)= \pi$ and $\angle_{v_4}(u_2, u_4)\geq \frac{\pi}{3}+\frac{\pi}{s}$. Furthermore, since $s<6$, we have $u_4\neq v_3$. Thus \cref{lem:angles_triangle_equality} gives 
\[
        \angle_{v_4}(\pi_\W(v_4), \pi_{\W_5}(v_4))=\min
        \left\{\begin{array}{l}
            \tfrac{\pi}{6}\lplus \tfrac{2\pi}{3}\lplus \tfrac{\pi}{s},\\
            \tfrac{\pi}{6} \lplus\angle_{v_4}(u_2,u_4)\lplus \frac{s-2}{s}\pi,  \\
            \tfrac{\pi}{3} \lplus\angle_{v_4}(v_3, v_5)\lplus \frac{\pi}{s}, \\
            \tfrac{\pi}{3} \lplus\angle_{v_4}(v_3,u_4)\lplus \frac{s-2}{s}\pi  \\
        \end{array}\right\} = \tfrac{\pi}{3} \lplus\angle_{v_4}(v_3,u_4)\lplus \tfrac{s-2}{s}\pi
\]

It follows that $\W_5$ is disjoint from $\W$, except possibly when $s=2$ and $u_4, v_3, v_4$ bound a square. However, after replacing $v_5$ with $u_4$ in $\gamma$, we see that this case has already been covered (\cref{fig:case_m3_W4_square}).

If $v_6$ is a vertex of $P_r$, then let $u\neq u_2$ be the other vertex of $P_q$ adjacent to $v$, and let $w\neq u_2$ be the other vertex of $P_r$ adjacent to $v$ (\cref{fig:case_m3_W4_square_hexagon_v6}). The wall dual to $[v,w]$ is $\W_4$, which we assumed intersects $\W$. The minimality of the length of $\gamma$ implies that $P_q$ has at least 14 sides. Because of this, $\pi_\W(v)$ is the center of $P_q$, and so~$\angle_v(u_2, \pi_\W(v))=\angle_v(u, \pi_\W(v))=\frac{5\pi}{12}$. Since $u_2, v, w$ bound a hexagon, we conclude from \cref{lem:bounding_2cells} that the vertices $u, v, w$ bound a square.
But then, if $m$ is the midpoint of the edge $[v, u]$, we also have that $\pi_\W(m)$ is the center of $P_q$, and so $\angle_m(\pi_\W(m), \pi_{\W_4}(m))=\pi$, contradicting the assumption that $\W_4$ intersects $\W$. 
\end{proof}

\begin{proposition}[Square case]\label[proposition]{prop:square_case}
    Suppose that $P_p$ is a square. Then $d_1(x, \W)\leq 4+\frac{1}{2}$.
\end{proposition}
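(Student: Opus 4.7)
The plan is to follow the iterative scheme of \cref{prop:hex_case}: at each step $j \geq 2$, I will apply \cref{lem:bounding_2cells}, the angle formula \cref{lem:angles_triangle_equality}, and the disjointness criterion \cref{lem:parallel_criterion} to constrain the cells around $v_{j-1}, v_j, v_{j+1}$, and show that the assumption ``$\W_j$ intersects $\W$ for every $j$'' can be maintained for at most three iterations. This would give $v_5 = x$, and hence $d_1(x, \W) \leq 4 + \frac{1}{2}$.

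First I would analyze the constraint at $v_1$ imposed by $\W_2$ intersecting $\W$. With $P_p$ a square, the projection $\pi_{\W_2}(v_1)$ is the midpoint of the edge $[v_1, w]$, where $w$ is the fourth vertex of $P_p$. Applying \cref{lem:angles_triangle_equality} at $v_1$, the angle $\angle_{v_1}(\pi_\W(v_1), \pi_{\W_2}(v_1))$ equals the shortest link distance in $\lk(v_1, X)$ from the direction of $v_0$ (the other vertex of $P_q$ adjacent to $v_1$) to the direction of $w$. Forcing this to be $<\pi$ via \cref{lem:parallel_criterion}, then invoking \cref{lem:bounding_2cells}, forces $v_0, v_1, w$ to bound an additional truncated polygon at $v_1$, restricting the combinatorial picture. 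I would then repeat at $v_3$: requiring $\W_3$ to intersect $\W$, together with \cref{lem:bounding_2cells} applied to $P_p$ at $v_3$, forces $v_2, v_3, v_4$ (or $w, v_3, v_4$) to bound some $2\ell$-truncated polygon with $\ell$ small. Each successive step contributes an additional interior angle (at least $\pi/2$, since every cell so far is a square) to the accumulating expression at the current vertex, pushing it towards $\pi$.

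The main obstacle will be the case analysis: the proof will branch by the truncation parameter $q \in \{3, 4, 6\}$ of $P_q$ and by the truncations of the successive polygons $P_r, P_s, \ldots$ identified along the way. As in the hexagon case, pathological ``loop-back'' configurations, in which cells of the chain share unexpected vertices or edges, will require a midpoint argument analogous to those in \cref{fig:red_to_k2_case2} and \cref{fig:case_m3_W4_square_hexagon_v6}: by replacing a vertex with the midpoint of a well-chosen edge, its projection to $\W$ lands at the center of a polygon, yielding a $\pi$-angle that contradicts the hypothesis that the relevant $\W_j$ intersects $\W$. Completing the analysis for $\W_4$ should then eliminate every configuration that would permit $v_6$ to exist.
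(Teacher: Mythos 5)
Your iterative outline points in the right direction, but it misses the pivotal structural step that makes the paper's square-case argument close, and it asserts two facts that are not correct.

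First, the paper's proof does not rest on a loose ``each step adds another $\pi/2$, so the angle accumulates to $\pi$'' heuristic. It rests on a sharp \textbf{Claim}: if $\W_3$ intersects $\W$, then necessarily $q=3$ (so $P_q$ is a hexagon) and $r=2$ (so $v_2, v_3, v_4$ bound a square). That claim is proved by applying \cref{lem:angles_triangle_equality} at $v_2$ through $P_q$ and then $P_r$, and then splitting into the cases $u_2 \neq u_2'$ (use $\angle_{v_2}(u_2,u_2')\geq \pi/2$) and $u_2 = u_2'$ (use the link-condition inequality $\tfrac{1}{q}+\tfrac{1}{r}<\tfrac{1}{2}$). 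Your parenthetical ``at least $\pi/2$, since every cell so far is a square'' is false: $P_q$ is a general even polygon (in fact the argument forces it to be a \emph{hexagon}), and $P_r$'s contribution $\tfrac{r-2}{r}\pi$ could a priori be as small as $0$. Without first pinning down $q$ and $r$ to these specific values, the iteration has no reason to terminate after three steps.

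Second, the mechanism you predict for the closing contradiction is not the one the paper uses in the square case. There is no midpoint argument here. Once $q=3$ and $r=2$ are forced, the paper observes that $\pi_\W(v_4)$ is the center of $P_q$ and that $[v_4,\pi_\W(v_4)]$ passes through $v_2$. The analysis of $\W_4$ at $v_3$ (again via \cref{lem:angles_triangle_equality}) forces $s=2$, and the contradiction comes from \emph{re-routing $\gamma$ through $u_3$} and applying the Claim to the re-routed path, which yields $\ell=3$ and violates the link condition. So the loop-back is dispatched by a purely combinatorial re-route-and-reapply-Claim argument, not by projecting a midpoint to a polygon center. Your proposal also targets the wrong bound: the paper shows $\W_4$ is already disjoint from $\W$, so the chain terminates one step earlier than you expect.
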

\begin{proof}
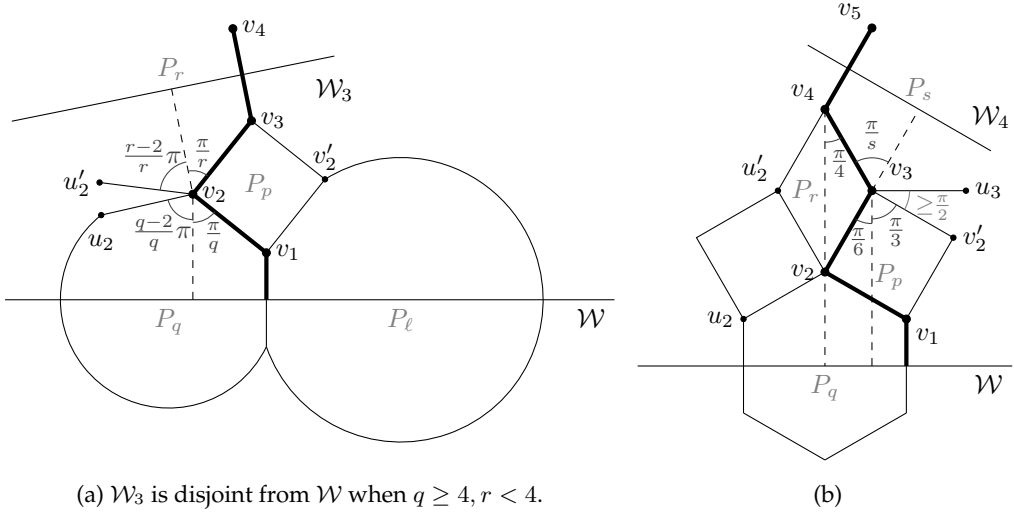
\begin{figure}
\centering
\centering
\def\len{1.25}
\begin{subfigure}{0.55\textwidth}
    \centering
    \begin{tikzpicture}
    \def\nq{3.5}
    \def\nl{((2*\nq)/(\nq-2))}

    \def\rq{0.5*\len cm / sin{(180/(2*\nq))}}
    \def\Rq{0.5*\len cm / tan{(180/(2*\nq))}}

    \def\rl{0.5*\len cm / sin{(180/(2*\nl))}}
    \def\Rl{0.5*\len cm / tan{(180/(2*\nl))}}

    \node [gray, below] (Cq) at (-\Rq, 0) {$P_q$};
    \foreach \k in {0,1,...,2} {
        \coordinate (v\k) at ($({180/(2*\nq)+(\k-1)*180/(\nq)}:\rq) - (\Rq, 0)$);
        \coordinate (q) at ($({180/(2*\nq)+\k*180/(\nq)}:\rq) - (\Rq, 0)$);
        
        \draw (v\k) -- (q);
    }
    \draw ($({180/(2*\nq)+2*180/(\nq)}:\rq)-(\Rq,0)$) arc ({180/(2*\nq)+2*180/(\nq)}:{360+180/(2*\nq)-180/(\nq)}:\rq);

    \coordinate (u2) at (q);
    \filldraw (u2) circle (1pt) node [below=4pt] {$u_2$};

    \coordinate (v2') at ($(v1)!1!{(\nl-1)*(180/\nl)}:(v0)$);
    \draw (v1)--(v2');
    
    \draw (v0) arc ({180-180/(2*\nl)+180/(\nl)-360}:{180-180/(2*\nl)-180/(\nl)}:\rl);

    \filldraw (v2') circle (1pt) node [above] {$v_2'$};
    \node [gray, below] (Cl) at (\Rl, 0) {$P_\ell$};

    \draw (-25-2*\Rq,0) -- (30+2*\Rl, 0) node [below left] {$\W$};

    \coordinate (v3) at ($(v2)!1!90:(v1)$);
    \coordinate (v4) at ($(v3)!1!-130:(v2)$);

    \filldraw (v1) circle (1.5pt) node [right] {$v_1$};
    \filldraw (v2) circle (1.5pt) node [right] {$v_2$};
    \filldraw (v3) circle (1.5pt) node [right] {$v_3$};
    \filldraw (v4) circle (1.5pt) node [right] {$v_4$};

    \draw[ultra thick] (v4) -- (v3) -- (v2) -- (v1) -- (0,0);
    
    \draw (v3) -- (v2');
    \node [gray] (Pp_center) at ($(v2)!0.5!(v2')$) {$P_p$};

    \coordinate (m3) at ($(v4)!0.5!(v3)$);
    \coordinate (W3a) at ($(m3)!5!-90:(v3)$);
    \coordinate (W3b) at ($(m3)!2!90:(v3)$);
    \draw (W3a) -- (W3b) node [below=5pt] {$\W_3$};

    \coordinate (u2') at ($(v2)!1!-20:(u2)$);
    \draw (v2) -- (u2');
    \filldraw (u2') circle (1pt) node [left] {$u_2'$};

    \coordinate (pv2) at ($(-1,0)!(v2)!(1,0)$);
    \coordinate (pv2_W3) at ($(W3a)!(v2)!(W3b)$);
    \draw [dashed] (pv2) -- (v2) -- (pv2_W3) node [gray, above] {$P_r$};

    \path (v1)--(v2)--(pv2)
        pic ["{$\frac{\pi}{q}$}", draw, color=darkgray, angle radius=0.3*\len cm, angle eccentricity=1.7] 
        {angle=pv2--v2--v1};

    \path (u2)--(v2)--(pv2)
        pic ["{$\frac{q-2}{q}\pi$}", draw, color=darkgray, angle radius=0.27*\len cm, angle eccentricity=1.95] 
        {angle=u2--v2--pv2};

    \path (u2')--(v2)--(pv2_W3)
        pic ["{$\frac{r-2}{r}\pi$}", draw, color=darkgray, angle radius=0.35*\len cm, angle eccentricity=1.7] 
        {angle=pv2_W3--v2--u2'};

    \path (v3)--(v2)--(pv2_W3)
        pic ["{$\frac{\pi}{r}$}", draw, color=darkgray, angle radius=0.25*\len cm, angle eccentricity=1.8] 
        {angle=v3--v2--pv2_W3};
    \end{tikzpicture}
    
\caption{$\W_3$ is disjoint from $\W$ when $q\geq 4, r<4$.}
\label{fig:claim_r<4}
\end{subfigure}
\begin{subfigure}{0.35\textwidth}
    \centering
    \begin{tikzpicture}
        \def\n{3}
            
        \def\r{0.5*\len cm / sin{(180/(2*\n))}}
        \def\R{0.5*\len cm / tan{(180/(2*\n))}}

        \foreach \k in {-3,-2,...,2} {
            \coordinate (v\k) at ({180/(2*\n)+(\k-1)*180/(\n)}:\r);
            \coordinate (q) at ({180/(2*\n)+\k*180/(\n)}:\r);
            
            \draw (v\k) -- (q);
        }
        \coordinate (Pq_center) at (0,0) node [gray, below] {$P_q$};

        \filldraw (v1) circle (1.5pt) node [below right] {$v_1$};
        \filldraw (v2) circle (1.5pt) node [left] {$v_2$};

        \coordinate (u2) at (q);
        \filldraw (u2) circle (1pt) node [left] {$u_2$};

        \draw  ({-2.3*\R}, 0) -- ({2.3*\R}, 0) node [below left] {$\mathcal{W}$};

        \coordinate (v3) at ($(v2)!1!90:(v1)$);
        \coordinate (v4) at ($(v3)!1!-120:(v2)$);
        \coordinate (v5) at ($(v4)!1!120:(v3)$);

        \filldraw (v3) circle (1.5pt) node [above right=1pt and 3pt] {$v_3$};
        \filldraw (v4) circle (1.5pt) node [above left] {$v_4$};
        \filldraw (v5) circle (1.5pt) node [above left] {$v_5$};

        \draw [ultra thick] (v5)--(v4)--(v3)--(v2)--(v1)--(\R,0);

        \coordinate (m4) at ($(v5)!0.5!(v4)$);
        \coordinate (W4a) at ($(m4)!1!-90:(v4)$);
        \coordinate (W4b) at ($(m4)!3.5!90:(v4)$);
        \draw (W4a) -- (W4b) node [above=5pt] {$\W_4$};

        \coordinate (v2') at ($(v3)!1!90:(v2)$);
        \filldraw (v2') circle (1pt) node [right] {$v_2'$};
        \draw (v3)--(v2')--(v1);

        \node [gray, below] (Pp_center) at ($(v2)!0.5!(v2')$) {$P_p$};
        
        \coordinate (u2') at ($(v2)!1!-90:(u2)$);
        \coordinate (tmp) at ($(u2')!1!-90:(v2)$);
        \draw (v2)--(u2')--(tmp)--(u2);
        \filldraw (u2') circle (1pt) node [above left] {$u_2'$};

        \draw (u2')--(v4);
        \node [gray, left] (Pr_center) at ($(u2')!0.55!(v3)$) {$P_r$};
        
        \coordinate (u3) at ($(v3)!1!-120:(v4)$);
        \draw (v3)--(u3);
        \filldraw (u3) circle (1pt) node [right] {$u_3$};

        \draw [dashed] (v4) -- (Pq_center);
        
        \coordinate (pv3) at ($(-1,0)!(v3)!(1,0)$);
        \coordinate (pv3_W4) at ($(W4a)!(v3)!(W4b)$);
        \draw [dashed] (pv3)--(v3)--(pv3_W4) node [gray, above] {$P_s$};

        \path (Pq_center)--(v4)--(v3)
            pic ["{$\frac{\pi}{4}$}", draw, color=darkgray, angle radius=0.35*\len cm, angle eccentricity=1.7] 
            {angle=Pq_center--v4--v3};

        \path (v2')--(v3)--(pv3)
            pic ["{$\frac{\pi}{3}$}", draw, color=darkgray, angle radius=0.3*\len cm, angle eccentricity=1.7] 
            {angle=pv3--v3--v2'};

        \path (v2)--(v3)--(pv3)
            pic ["{$\frac{\pi}{6}$}", draw, color=darkgray, angle radius=0.35*\len cm, angle eccentricity=1.6] 
            {angle=v2--v3--pv3};

        \path (v4)--(v3)--(pv3_W4)
            pic ["{$\frac{\pi}{s}$}", draw, color=darkgray, angle radius=0.35*\len cm, angle eccentricity=1.7] 
            {angle=pv3_W4--v3--v4};

        \path (u3)--(v3)--(v2')
            pic ["\small{$\geq\!\!\frac{\pi}{2}$}", draw, color=gray, angle radius=0.4*\len cm, angle eccentricity=1.7] 
            {angle=v2'--v3--u3};
    \end{tikzpicture}
    
    \caption{}
    \label{fig:case_m=2_c}
\end{subfigure}
\unskip
\caption{Square case}
\label{fig:case_m2}
\end{figure}
Let $v_2'$ be the vertex opposite to $v_2$ in $P_p$. Then the wall dual to $[v_1, v_2']$ is $\W_2$, which we have assumed intersects $\W$. Thus by definition of~$k$, the points $\pi_\W(v_1), v_1$ and $v_2'$ are contained in a $2\ell$-truncated polygon $P_\ell$ for some $\ell$. 
Up to replacing $v_2$ with $v_2'$ in $\gamma$, we may assume by \cref{lem:bounding_2cells} that $v_2,v_3,v_4$ bounds a $2r$-truncated polygon $P_r$ for some $r$ satisfying $\frac{\pi}{r}>\angle_{v_3}(v_2, \pi_\W(v_3))$. Note that $\angle_{v_3}(v_2, \pi_\W(v_3))= \frac{\pi}{6}$ when $q=3$ (since $q=3$ implies $\ell=6$) and $\angle_{v_3}(v_2, \pi_\W(v_3))\geq \frac{\pi}{4}$ when $q\geq 4$. In particular, we have~$\frac{\pi}{q}\lplus \frac{\pi}{2}\lplus \frac{\pi}{r}=\pi$. 

Let now $u_2\neq v_1$ be the other vertex of $P_q$ adjacent to $v_2$, and $u_2'\neq v_3$ be the other vertex in $P_r$ adjacent to $v_2$ (\cref{fig:claim_r<4}).  By the link condition, $\angle_{v_2}(v_1, u_2')\geq \frac{\pi}{2}+\frac{\pi}{r}$ and $\angle_{v_2}(u_1, v_3)\geq \frac{\pi}{2}+\frac{\pi}{r}$. 
Applying \cref{lem:angles_triangle_equality} first to the cell $P_q$, then to the cell $P_r$ gives
\[
        \angle_{v_2}(\pi_\W(v_2), \pi_{\W_3}(v_2))=\min
        \left\{\begin{array}{l}
            \tfrac{\pi}{q}\lplus \tfrac{\pi}{2}\lplus \tfrac{\pi}{r},\\
            \tfrac{\pi}{q} \lplus\angle_{v_2}(v_1,u_2')\lplus \frac{r-2}{r}\pi,  \\
            \tfrac{q-2}{q}\pi \lplus\angle_{v_2}(u_1, v_3)\lplus \frac{\pi}{r}, \\
            \tfrac{q-2}{q}\pi \lplus\angle_{v_2}(u_2,u_2')\lplus \frac{r-2}{r}\pi  \\
        \end{array}\right\} = \tfrac{q-2}{q}\pi \lplus\angle_{v_2}(u_2,u_2')\lplus \tfrac{r-2}{r}\pi
\]

\textbf{Claim:}\label{claim} \emph{
If $\W_3$ intersects $\W$, then $q=3$ and $r=2$.} 
Analogously, if $\W_3$ intersects $\W$ and $v_2', v_3, v_4$ bound a $2r'$-truncated polygon with $\frac{\pi}{r'}>\angle_{v_3}(v_2', \pi_\W(v_3))$, then $\ell=3$ and $r'=2$. 

\begin{proof}
Since $q>2$, it suffices to show that $\angle_{v_2}(\pi_\W(v_2), \pi_{\W_3}(v_2))=\pi$ whenever $q\geq 4$ or $r\geq 3$. 
If $u_2\neq u_2'$ then $\angle_{v_2}(u_2, u_2')\geq \frac{\pi}{2}$, and so 
$
        \angle_{v_2}(\pi_\W(v_2), \pi_{\W_3}(v_2))
        \geq \tfrac{q-2}{q}\pi\lplus\tfrac{\pi}{2}\lplus\tfrac{r-2}{r} = \pi
$
whenever $q\geq 4$ or $r\geq 3$.
If $u_2=u_2'$, then the link condition implies that $\frac{1}{q}+\frac{1}{r}<\frac{1}{2}$, and therefore $\tfrac{q-2}{q}\pi \lplus \tfrac{r-2}{r}\pi=2\pi-2\left(\frac{\pi}{q}+\frac{\pi}{r}\right)>\pi$. 
\end{proof}

We may now assume that $\W_3$ intersects $\W$, and thus that $P_q$ is a hexagon and $v_2, v_3, v_4$ bound a square. Furthermore, the link condition implies that $u_2\neq u_2'$ and since $\angle_{v_2}(\pi_\W(v_2), \pi_{\W_3}(v_2)) = \tfrac{\pi}{3}\lplus \angle_{v_2}(u_2, u_2')$, we get that $u_2, v_2, u_2'$ also bound a square.
In this case, $\pi_\W(v_4)$ is the center of $P_q$, and the geodesic $[v_4, \pi_\W(v_4)]$ contains $v_2$ (\cref{fig:case_m=2_c}). If $\W_4$ intersected $\W$, then by \cref{lem:bounding_2cells} we may assume, up to replacing $v_1$ with $u_2$ in $\gamma$, that $v_3, v_4, v_5$ bound a $2s$-truncated polygon $P_s$ with $s<4$ . Let $u_3\neq v_4$ be the other vertex of $P_s$ adjacent to $v_3$. By the link condition, $u_3\neq v_2'$, $\angle_{v_3}(v_2', v_4)=\pi$ and $\angle_{v_3}(u_3, v_2)\geq \frac{5\pi}{6}$. Thus, \cref{lem:angles_triangle_equality} gives 
\[
    \angle_{v_3}(\pi_\W(v_3), \pi_{\W_4}(v_3)) =\min
    \left\{\begin{array}{l}
        \tfrac{\pi}{6}\lplus\tfrac{\pi}{2}\lplus \tfrac{\pi}{s},\\
        \tfrac{\pi}{6}\lplus \angle_{v_3}(v_2, u_3)\lplus \tfrac{s-2}{s}\pi,\\
        \tfrac{\pi}{3}\lplus \angle_{v_3}(v_2', v_4)\lplus \tfrac{\pi}{s},\\
        \tfrac{\pi}{3}\lplus \angle_{v_3}(v_2', u_3)\lplus\tfrac{s-2}{s}\pi
    \end{array}\right\}
    = \tfrac{\pi}{3}\lplus \angle_{v_3}(v_2', u_3)\lplus\tfrac{s-2}{s}\pi
\]

In particular, $\W_4$ could only intersect $\W$ if $s=2$ and $v_2', v_3, c_4$ bound a square. But then after replacing $v_4$ with $u_3$ in $\gamma$, we get by the claim that $\ell=3$, contradicting the link condition. Therefore, $\W_4$ is disjoint from $\W$, concluding the proof.
\end{proof}

\bibliographystyle{alpha}
\bibliography{refs}
\end{document}